\newcommand{\R}{\ensuremath{\mathbb{R}}}
\newcommand{\N}{\ensuremath{\mathbb{N}}}
\newcommand{\CC}{\mathcal{C}}
\newcommand{\CU}{\ensuremath{\mathcal{U}}}
\newcommand{\ov}{\overline}
\newcommand{\la}{\lambda}
\newcommand{\g}{\gamma}
\newcommand{\G}{\Gamma}
\newcommand{\T}{\theta}
\newcommand{\f}{\varphi}
\newcommand{\al}{\alpha}
\newcommand{\be}{\beta}
\def\p{\partial}
\newtheorem {theorem} {Theorem}
\newtheorem {definition} {Definition}
\newtheorem {proposition} {Proposition}
\newtheorem {lemma}  {Lemma}
\newtheorem {remark} {Remark}
\newtheorem {mcor} {Corollary}
\newtheorem{main}{Theorem}
\begin{document}
\renewcommand{\arraystretch}{1.5}

\title[Chaos induced by sliding phenomena in Filippov systems]
{Chaos induced by sliding phenomena\\ in Filippov systems}

\author[Novaes, Ponce \& Var\~ao ]
{Douglas D. Novaes, Gabriel Ponce and R\'egis Var\~{a}o}

\address{ Departamento de Matem\'{a}tica, Universidade
Estadual de Campinas, Rua S\'{e}rgio Baruque de Holanda, 651, Cidade Universit\'{a}ria Zeferino Vaz, 13083--859, Campinas, SP,
Brazil} \email{ddnovaes@ime.unicamp.br} \email{gaponce@ime.unicamp.br} \email{regisvarao@ime.unicamp.br}

\subjclass[2010]{34A36,37C29,34C28,37B10}

\keywords{Filippov systems, Shilnikov sliding orbits, Bernoulli shifts, Chaos}

\maketitle

\begin{abstract}
In this paper we provide a full topological and ergodic description of the dynamics of Filippov systems nearby a sliding Shilnikov orbit $\Gamma$. More specifically we prove that the first return map, defined nearby $\Gamma$, is topologically conjugate to a Bernoulli shift with infinite topological entropy. In particular, we see that for each $m\in\mathbb{N}$ it has infinitely many periodic points with period $m$.
\end{abstract}


\section{Introduction}

Real world problems have been the main motivation on discontinuous differential systems. They are very useful to model phenomena presenting abrupt switches such as electronic relays, mechanical impact, mitosis of living cells, and Neuronal networks. That is one of the reasons for this area to have such a variety of rich examples \cite{BS, BBCK, Co, SJ, physDspecial}.  Therefore the further we understand discontinuous differential systems the more one is prepared to analyse real world problems.
 
When facing a discontinuous differential system, the natural question which rises is ``How to define a consist concept of solutions''. One important paradigm to tackle this question is due to Filippov. In his famous book \cite{F} Filippov studied these systems taking advantage of the well developed theory of differential inclusions \cite{AC}. Then, for a class of discontinuous vector fields $Z$, he provided a branch of rules for what would be a local trajectory of $\dot u=Z(u)$ nearby a point of discontinuity. For instance, consider
\begin{equation}\label{dds}
Z(u)=\left\{\begin{array}{l}
X(u),\quad\textrm{if}\quad h(u)>0,\vspace{0.1cm}\\
Y(u),\quad\textrm{if}\quad h(u)<0,
\end{array}\right.
\end{equation}
where $u\in K,$ being $K$ a closure of an open subset of $\R^n,$  $X,Y$ are $\CC^r$ vector fields, and $h:K\rightarrow\R$ has $0$ as a regular value. The rules stated by Filippov may be applied to establish the notion of local solution of the discontinuous differential system $\dot u=Z(u)$ at a point of discontinuity $\xi\in M=h^{-1}(0).$ Nowadays these rules are known as the Filippov's conventions, and it turns out that for many physical models these conventions are the ones which have physical meaning \cite{BBCK}. Accordingly, discontinuous differential systems rulled by Filippov's conventions are called {\it Filippov systems}. Due to its importance, not only from the mathematical point of view as well as the physical point of view, we shall assume the Fillipov's conventions throughout the paper. Under this convention the switching manifold $M$ can be generically decomposed in  three regions with distinct dynamical behaviours, namely: {\it crossing} $M^c$, {\it sliding} $M^s$, and {\it escaping} $M^e.$ Concisely, the system $\dot u=Z(u)$ may admit solutions either side of the discontinuity $M$ that can be joined continuously, forming a solution that {\it crosses} $M^c\subset M$. Alternatively, solutions might be found to impinge upon $M$, after which they join continuously to solutions that {\it slide} inside $M^{s,e}=M^s\cup M^e\subset M$. See items $(i)$--$(v)$ of section  \ref{pre} for the precise definition of the Filippov's conventions. 

Nonlinear systems may present intricate and complex behaviours such as chaotic motions. Roughly speaking, chaos can  be understood as the existence of an invariant compact set $\Lambda$ of initial conditions for which their trajectories are transitive and exhibit sensitive dependence on $\Lambda$ \cite{W,De,Me}. Each phenomenon from the ordinary theory of differential systems finds its analogous in discontinuous differential systems. However Filippov systems admit a richer variety of behaviours. New chaotic modes rising in discontinuous differential systems have been recently investigated. For instance, in \cite{BCE, CJ} it was studied chaotic set--valued trajectories (nondeterministic chaos), and in \cite{BF1,BF2} the Melnikov ideas was applied to determine the existence of chaos in nonautonomous Filippov systems. Here we shall study deterministic chaos in autonomous Filippov systems.

In this article we analyse 3D Filippov systems admitting a {\it sliding Shilnikov orbit}  $\G$ (see Figure \ref{slidingshil}), which is an entity inherent to Filippov systems. It was first studied in \cite{NT}. In the classical theory a {\it Shilnikov homoclinic orbit} of a smooth vector field is a trajectory connecting a hyperbolic saddle--focus equilibrium to itself, bi--asymptotically.  It is well known that a chaotic behaviour may rise when the Shilnikov homoclinic orbit is unfolded  \cite{PR,Ho}. In the Filippov context pseudo--equilibria are special points contained in $M^{s,e}$ that must be distinguished and treated as typical singularities (see Definition \ref{pequi}). These singularities give rise to the definition of the sliding homoclinic orbit, that is a trajectory, in the Filippov sense, connecting a pseudo--equilibrium  to itself in an infinity time at least by one side, forward or backward.  Particularly a sliding Shilnikov orbit (see Definition \ref{defshil}) is a sliding homoclinic orbit connecting a hyperbolic pseudo saddle–focus $p_0\in M^s$ to it self. This trajectory intersects the boundary $\p M^s$ of $M^s$ at a point $q_0$ (see Figure \ref{slidingshil}).

\begin{figure}[h]
\begin{center}
\begin{overpic}[width=10cm]{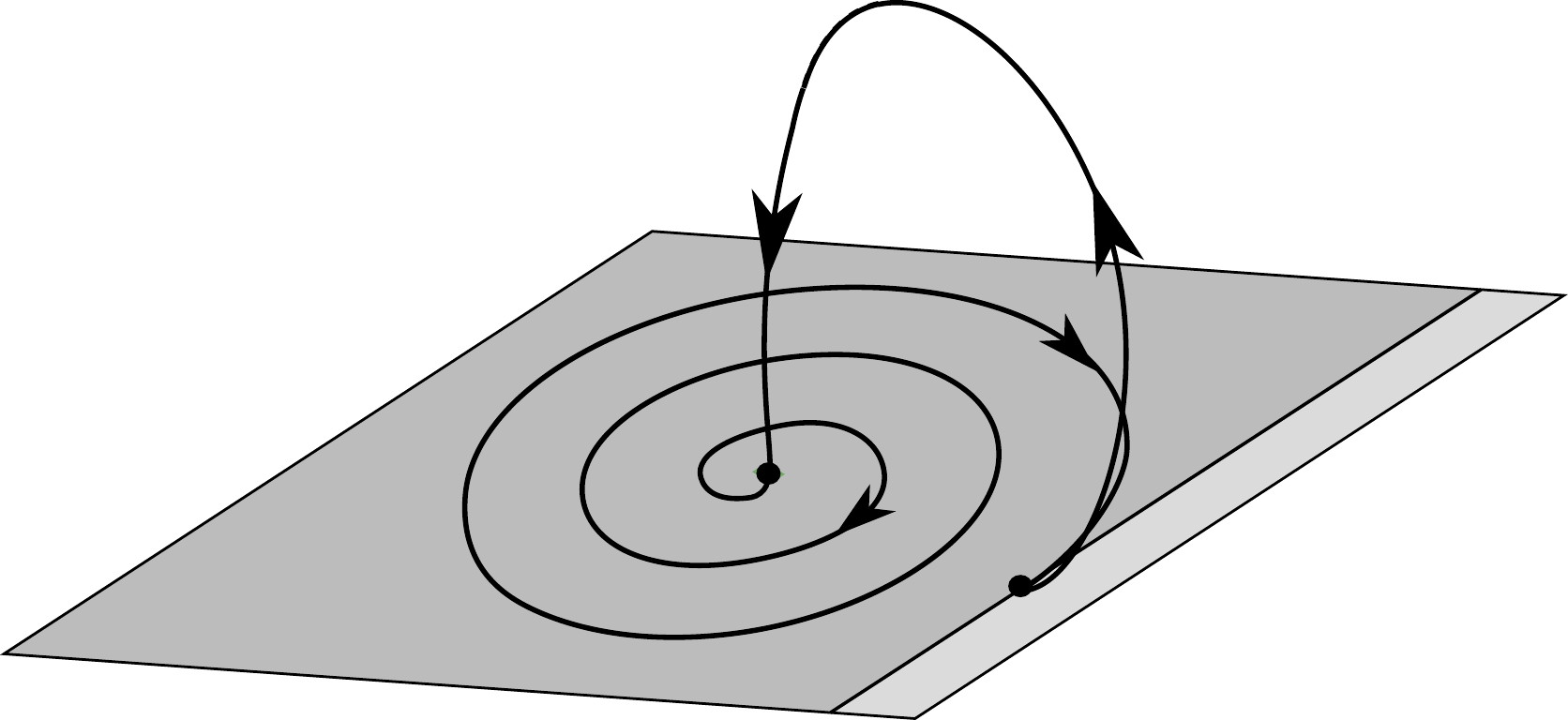}
\put(66,42){$\G$}
\put(50.5,15){$p_0$}
\put(63.5,5.5){$q_0$}
\put(10,6){$M^s$}
\put(92,29){$\p M^s$}
\end{overpic}
\end{center}

\bigskip

\caption{The point $p_0\in M^s$ is a hyperbolic pseudo saddle–focus. The trajectory $\G$, called Shilnikov sliding orbit, connects $p_0$ to itself passing through the point $q_0\in\p M^s$. We note that the flow leaving $q_0$ reaches the point $p_0$ in a finite positive time, and approaches backward to $p_0$, asymptotically. }\label{slidingshil}
\end{figure}

In dynamics one is concerned to get the most possible complexity from a given system, that is why often for systems which exhibit some complexity one is able to find a Bernoulli shift as a factor. For smooth dynamical systems one may benefit from certain geometrical structures (hyperbolicity) which imply the existence of stable and unstable manifolds. That has been the case in many studies, we mention two classical works done by Bowen \cite{bowen} and Tresser \cite{Tre}. The systems we shall study in this paper do not benefit from these geometrical structures, so we have to use the properties of the Filippov systems themselves to overcome this lacking of structures (e.g. the well-posedness of the $\eta_*$ function, see \S \ref{sec:proof}).

Let $Z$ be a 3D discontinuous vector field like \eqref{dds} defined on $K\subset \R^3$. Assume that the Filippov system $\dot u=Z(u)$ admits a sliding Shilnikov orbit $\G$ connecting a hyperbolic pseudo saddle–focus $p_0\in M^s$ to it self and intersecting the curve $\p M^s$ at a point $q_0$. Consider a small neighbourhood $\mathcal U$ of $q_0$ in $\p M^s$, and denote by $\Lambda \subset \mathcal U$ the points which return infinitely often by the forward orbit of the flow to this neighbourhood of $q_0$, or, if you wish, the set $\Lambda$ is the maximal set in this neighbourhood which is $\pi$ invariant (see subsection \ref{1streturn} for more information). As we shall see, this set is non vanishing. The complexity of a flow is interpreted as the complexity of its returning map $\pi|\Lambda$. In this context our main result states that $\pi$ can be as much chaotic as one wishes:

\begin{main}\label{theo:topological}
There is a small neighbourhood $\mathcal U \subset \partial M^s$ of $q_0$ and a set $\Lambda \subset \mathcal U$ as above such that: 

\begin{itemize}
 \item[a)] for each $k \in \mathbb N$ there exists a $\pi$-invariant cantor set $\Lambda_k \subset \Lambda$ such that $\pi|\Lambda_k$ is conjugate to the shift on $\Sigma_2 \times \Sigma_k^*$, that is $$h_k\circ \sigma_k  = \pi \circ h_k $$
where $h_k: \Sigma_2 \times \Sigma_k^*  \rightarrow \Lambda_k$ is a homeomorphism. In particular the dynamics on $\Lambda_k$ is transitive, sensitive to initial conditions and have dense periodic points.

\item[b)] There is a homeomorphism $h: \Sigma_2 \times \Sigma^b \rightarrow \Lambda := \bigcup_{k}\Lambda_k$ such that $h$ conjugates the dynamics of $\sigma$ and $\pi$ and $\Lambda \cup \{ q_0 \}$ is a compact set. In particular the topological entropy of $\pi$ is infinite. 
\end{itemize}
\end{main}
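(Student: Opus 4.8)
The plan is to construct, near the point $q_0\in\partial M^s$, a first return map $\pi$ whose dynamics is organized by the local behaviour of the Filippov flow around the hyperbolic pseudo saddle--focus $p_0$, and then to encode orbits symbolically. The first step is to set up good coordinates: a two--dimensional cross--section $\Sigma$ transverse to $\G$ (which one may take inside a neighbourhood $\CU\subset\partial M^s$ of $q_0$), and to describe the two ``halves'' of the return. The first half is the sliding passage: starting on $\partial M^s$ near $q_0$, trajectories enter $M^s$, slide toward $p_0$, and because $p_0$ is a pseudo saddle--focus the sliding vector field spirals, so the time to leave a small neighbourhood of $p_0$ along the unstable sliding direction is governed by a ``rotation + contraction'' normal form; this is where the well--posedness of the function $\eta_*$ (the map recording how many times an orbit winds before exiting) must be established, and it is the crucial structural input replacing the missing hyperbolic invariant manifolds. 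The second half is the regular passage: from a neighbourhood of $p_0$ the flow of $X$ (or $Y$) carries the orbit in finite time back to a neighbourhood of $q_0$ on $\partial M^s$, giving a smooth (hence mild) diffeomorphism between the relevant sections. Composing, $\pi$ is defined on a countable union of thin strips $\CR_n$ in $\CU$, indexed by the winding number $n$, with the geometry of a Smale horseshoe in each strip.

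The second step is the symbolic coding. On each strip $\CR_n$ the return map stretches in one direction and contracts in the other, and maps $\CR_n$ across infinitely many of the strips $\CR_m$ with full Markov crossing, so the transition ``graph'' is the full shift on a countable alphabet in one factor and on $\{0,1\}$ (or $\{1,\dots,k\}$ after restriction) in the other. Concretely I would show: (i) for every admissible bi--infinite sequence the nested intersection of preimages of strips is a single point, using uniform hyperbolicity estimates (cone fields, bounded distortion) for $\pi$ on $\bigcup_n\CR_n$ that follow from the spiral normal form; (ii) this gives a homeomorphism $h_k:\Sigma_2\times\Sigma_k^*\to\Lambda_k$ conjugating $\sigma_k$ and $\pi|\Lambda_k$ for each finite truncation $k$ of the countable alphabet; (iii) letting $k\to\infty$, the sets $\Lambda_k$ increase and one assembles $h:\Sigma_2\times\Sigma^b\to\Lambda=\bigcup_k\Lambda_k$ conjugating $\sigma$ and $\pi$, checking continuity of $h$ and $h^{-1}$ with respect to the product topology on the (non--compact) symbol space $\Sigma^b$. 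Transitivity, sensitivity, and density of periodic points on each $\Lambda_k$ are then inherited from the corresponding well--known properties of finite--alphabet shifts, and periodic points of every period $m$ come from periodic sequences. The compactness of $\Lambda\cup\{q_0\}$ is seen by noting that the strips $\CR_n$ accumulate only on $q_0$ (their diameters and distances to $q_0$ shrink as $n\to\infty$), so any sequence in $\Lambda$ either stays in finitely many $\Lambda_k$, where compactness of the Cantor set applies, or converges to $q_0$. Finally, infinite topological entropy follows because $\pi|\Lambda_k$ already has entropy $\log(2k)\to\infty$, and entropy is monotone under the inclusion $\Lambda_k\subset\Lambda$, or directly because a full shift on a countable alphabet has infinite entropy.

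I expect the main obstacle to be step one: proving the well--posedness and the required regularity/estimates for the transition near the pseudo saddle--focus $p_0$. Unlike the smooth Shilnikov scenario, there are no stable/unstable manifolds of a hyperbolic equilibrium to lean on; instead one must analyze the \emph{sliding} vector field on $M^s$, show it has $p_0$ as a genuine hyperbolic focus with complex eigenvalues (so orbits spiral), control the return/exit time as the orbit approaches $\partial M^s$ near $q_0$, and verify that the resulting countable family of strips $\CR_n$ has the uniform hyperbolicity, bounded distortion, and Markov crossing properties needed for the coding — in particular that $\eta_*$ is continuous where it should be and jumps in a controlled way across strip boundaries. Once these quantitative estimates are in hand, the symbolic dynamics (step two) and the entropy/compactness conclusions (step three) are comparatively routine adaptations of classical horseshoe arguments to a countable alphabet.
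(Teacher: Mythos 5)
Your overall skeleton --- a countable-alphabet symbolic coding indexed by the winding number around $p_0$, a second factor recording which side of $q_0$ the orbit lands on, nested intersections shrinking to points, entropy $\log(2k)\to\infty$, and compactification of $\Lambda$ by adjoining $q_0$ --- matches the paper, and you correctly single out the well-posedness of $\eta_*$ near the pseudo saddle--focus as the structural input replacing invariant manifolds. However, the geometric mechanism you propose for the core step is not the right one for this problem. There is no horseshoe here: a two-dimensional section transverse to $\G^+$ is of no use because the sliding segment of every returning orbit collapses the flow onto $M^s$, so after one return the dynamics is carried entirely by the one-dimensional fold line $\partial M^s$ (a neighbourhood of $q_0$ in $\partial M^s$ is an interval, not a surface --- your own phrasing is inconsistent on this point). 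Consequently the first return map $\pi$ is a non-invertible, purely \emph{expanding} one-dimensional map on a countable union of intervals $I_i$ accumulating at $q_0$; there is no contracting direction, no cone fields, no bounded-distortion machinery, and the coding is by one-sided sequences (the theorem's $\Sigma_2\times\Sigma_k^*$ are $\mathbb N$-indexed), not by admissible bi-infinite sequences. Your step (i) as stated --- singletons via uniform hyperbolicity estimates for a stretch-and-contract map --- does not apply and must be replaced.

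What the paper does instead: Proposition \ref{prop:pi.expands} decomposes $\pi$ as (flow of $X$ from the fold line to a transversal $\mu_r$ through $p_0$) followed by ($\ov k$ iterates of the return map of the unstable sliding focus on $\mu_r$, conjugate to multiplication by $\la$ with $|\la|>1$) followed by (sliding flow back to the fold line), and chooses $r$ so small that the forced number of windings $\ov k$ yields $|\pi'(\xi)|>1$ everywhere on $\CU_r$. The cylinder sets $P_m(X,N)$ defined by the itinerary $(x_i)$ in $\{I_0,I_1\}$ and the winding counts $(n_i)$ are nested closed intervals, and $P(X,N)$ is a single point by a pigeonhole/length argument: if it had positive length, its forward images would have strictly increasing lengths inside the bounded interval $I$, so two of them would meet, forcing a $\pi$-periodic nondegenerate interval, contradicting $|\pi'|>1$. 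If you replace your hyperbolicity step with this expansion estimate and switch to one-sided itineraries, the remainder of your plan (continuity of $h_k$ and $h$, transfer of transitivity, sensitivity and dense periodic points from the shift, and the entropy and compactness conclusions) goes through essentially as you describe.
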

For the definitions of $\Sigma_2,$ $\Sigma_k^*,$ $\Sigma^b,$ $\sigma_k,$ and $\sigma$ see $\eqref{sigmak},$ $\eqref{sigmak*},$ $\eqref{sigmab},$ $\eqref{sigmakk},$ $\eqref{sigma},$ respectively. 

We remark that in view of Theorem \ref{theo:topological}, given any natural number $m\geq 1$ we can find infinitely many periodic points for the first return map with period $m$ and, consequently, infinity many closed orbits of $\dot u=Z(u)$ nearby $\G$. Indeed, given $k\geq 1$, each periodic point of period $m$ for $\sigma_k$ is mapped by $h_k$ in a periodic point of period $m$ for $\pi$, thus varying $k \geq 1$ we obtain infinitely many periodic points of a fixed period $m$ for $\pi$.

We also obtain another two consequences from Theorem \ref{theo:topological}. The first one states that we are able to understand any compact invariant set for the first returning map (or flow) by some dynamics on a symbolic dynamics (Corollary \ref{cor:compact.invariant}). Another consequence is that this system is also as chaotic as one can be from the ergodic point of view (Corollary \ref{theo:ergodic}).

\section{Filippov system and the sliding Shilnikov orbit}\label{pre}

In this section we shall introduce the basic concepts of Filippov systems as well the definition of {\it sliding Shilnikov orbit}. We remark that a major part of this section is constituted by a well known theory and may be found in other works (see for instance \cite{F,GST,NT}).

Let $K$ be the closure of an open subset of $\R^n$, and let $X,Y\in \CC^r(K,\R^3)$, be  $\CC^ r$ vector fields defined on $K\subset\R^3$. We denote by $\Omega_h^r(K,\R^3)$ the space of piecewise vector fields 
\begin{equation}\label{omega}
Z(u)=\left\{\begin{array}{l}
X(u),\quad\textrm{if}\quad h(u)>0,\vspace{0.1cm}\\
Y(u),\quad\textrm{if}\quad h(u)<0,
\end{array}\right.
\end{equation}
defined on $K$, being $0$ a regular value of the differentiable function $h:K\rightarrow\R$. As usual, system \eqref{omega} is denoted by $Z=(X,Y)$ and the surface of discontinuity $h^{-1}(0)$ by $M$ . So $\Omega_h^r(K,\R^3)=\CC^r(K,\R^3)\times \CC^r(K,\R^3)$ is endowed with the product topology, while $\CC^r(K,\R^3)$ is endowed with the $\CC^r$ topology. We concisely denote $\Omega_h^r(K,\R^3)$ and $\CC^r(K,\R^3)$ only by $\Omega^r$ and  $\CC^r$, respectively. 

In order to understand the Filippov's convention for the discontinuous differential system $\dot u=Z(u)$  we need to distinguish some regions on $M$. The points on $ M$ where both vectors fields $X$ and $Y$ simultaneously point outward or inward from $M$ define, respectively, the {\it escaping} $M^e$ or {\it sliding} $M^s$ regions, and the interior of its complement in $M$ defines the {\it crossing region} $M^c$. The complementary of the union of those regions  constitute by the {\it tangency} points between $X$ or $Y$ with $M$. 

The points in $M^c$ satisfy $Xh(\xi)\cdot Yh(\xi) > 0$, where $Xh(\xi)=\langle \nabla h(\xi), X(\xi)\rangle$. The points in $ M^s$ (resp. $ M^e$) satisfy $Xh(\xi)<0$ and $Yh(\xi) > 0$ (resp. $Xh(\xi)>0$ and $Yh(\xi) < 0$). Finally, the tangency points of $X$ (resp. $Y$) satisfy $Xh(\xi)=0$ (resp. $Yh(\xi)=0$).

Now we define the {\it sliding vector field}
\begin{equation}\label{slisys}
\widetilde Z(\xi)=\dfrac{Y h(\xi) X(\xi)-X h(\xi) Y(\xi)}{Y h(\xi)- Xh(\xi)}.
\end{equation}

\begin{definition}\label{pequi} A point $\xi^*\in M^{s,e}$ is called a {\it pseudo--equilibrium} of $Z$ if it is a singularity of the sliding vector field, i.e. $\widetilde Z(\xi^*)=0$. When $\xi^*$ is a hyperbolic singularity of $\widetilde Z$, it is called a {\it hyperbolic pseudo--equilibrium}. Particularly if $\xi^*\in M^s$ (resp. $\xi^*\in M^e$) is an unstable (resp. stable) hyperbolic focus of $\widetilde Z$
then we call $\xi^*$ a {\it hyperbolic pseudo saddle--focus}.
\end{definition}

Let $\f_W$ denotes the flow of a smooth vector field $W$. 
The local trajectory $\f_Z(t,p)$ of $\dot u= Z(u)$ passing through a point $p\in\R^3$ is given by the Filippov convention (see \cite{F,GST}). Here $0\in I_p\subset\R$ denotes the maximum interval of definition of $\f_Z(t,p)$, and $\f_W$ denotes the flow of a vector field $W$. The Filippov convention is summarized as following: 

\begin{itemize}
\item[$(i)$] for $p\in\R^3$ such that $h(p)>0$ (resp. $h(p)<0$) and taking the origin of time at $p$, the trajectory is defined as $\f_Z(t,p)=\f_X(t,p)$ (resp. $\f_Z(t,p)=\f_Y(t,p)$) for $t\in I_p$.

\item[$(ii)$] for $p\in M^c$ such that $(Xh)(p),(Yh)(p)>0$ and taking the origin of time at $p$, the trajectory is defined as $\f_Z(t,p)=\f_Y(t,p)$ for $t\in I_p\cap\{t<0\}$ and $\f_Z(t,p)=\f_X(t,p)$ for $t\in I_p\cap\{t>0\}$. For the case $(Xh)(p),(Yh)(p)<0$ the definition is the same reversing time;

\item[$(iii)$] for $p\in M^s$ and taking the origin of time at $p$, the trajectory is defined as $\f_Z(t,p)=\f_{\widetilde{Z}}(t,p)$ for $t\in I_p\cap\{t\geq 0\}$ and $\f_Z(t,p)$ is either $\f_X(t,p)$ or $\f_Y(t,p)$ or $\f_{\widetilde{Z}}(t,p)$ for $t\in I_p\cap\{t\leq 0\}$. For the case $p\in M^e$ the definition is the same reversing time;

\item[$(iv)$]  For $p\in\p M^c\cup\p M^s\cup\p M^e$ such that the definitions of trajectories for points in $ M$ in both sides of
$p$ can be extended to $p$ and coincide, the orbit through $p$ is this limiting orbit. We will call these
points {\it regular tangency} points.

\item[$(v)$] for any other point ({\it singular tangency} points) $\f_Z(t,p)=p$ for all $t\in\R$;
\end{itemize}

A tangency point $p\in M$ is called a {\it visible fold} of 
$X$ $($resp. $Y)$ if $X^2h(p)>0$ $($resp. $Y^2h(p)<0)$.  Analogously, reversing the inequalities, we define an {\it invisible fold}. A fold $p$ of $X$ (resp. $Y$), visible or invisible, such that $Yh(p)\neq 0$ (resp. $Xh(p)\neq0)$ is called a {\it regular--fold} point. The regular--fold points are examples of $(iii),$ regular tangency points.

\begin{remark}For a visible regular-fold $p$ such that $Yh(p)>0$, taking the origin of time at $p$, the trajectory passing through $p$ is defined as $\f_Z(t,p)=\f_1(t,p)$ for $t\in I_p\cap\{t\leq 0\}$ and $\f_Z(t,p)=\f_2(t,p)$ for $t\in I_{p}\cap\{t\geq 0\}$, where each $\f_1,\f_2$ is either $\f_X$ or $\f_Y$ or $\f_{\widetilde{Z}}$. 
\end{remark}

The next result provides the dynamics of the sliding vector field near regular-fold points. A proof of that can be find in \cite{T}.
\begin{proposition}[\cite{T}]\label{transv}
Given $Z=(X,Y)\in\Omega^r$ if $p\in\p M^{e,s}$ is a fold--regular point of $Z$ then the sliding vector field $\widetilde Z$ \eqref{slisys} is transverse to $\p M$ at $p$.
\end{proposition}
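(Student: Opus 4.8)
The plan is to reduce to a normal situation and then make a direct computation. Exchanging the roles of $X$ and $Y$ if necessary, and performing a time reversal (which swaps $M^s$ and $M^e$ and replaces $\widetilde Z$ by $-\widetilde Z$), it suffices to treat the case in which $p\in\p M^s$ is a regular fold of $X$; thus $Xh(p)=0$, $Yh(p)>0$, and $X^2h(p)\neq 0$ (the sign distinguishing the visible from the invisible case). First I would observe that, since $Yh(p)>0$, the denominator $Yh-Xh$ of \eqref{slisys} equals $Yh(p)>0$ at $p$ and remains positive nearby, so $\widetilde Z$ is a well-defined vector field on a neighbourhood of $p$ in $M$, and one checks from \eqref{slisys} that $\widetilde Z h\equiv 0$, i.e. $\widetilde Z$ is tangent to $M$.

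Next I would identify $\p M$ near $p$. Since $Yh>0$ on $M$ near $p$, the region $M^s$ is locally cut out inside $M$ by the inequality $Xh<0$, so $\p M$ coincides near $p$ with the fold curve $\{h=0\}\cap\{Xh=0\}$. This is a genuine one-dimensional submanifold: as $Xh(p)=0$ we have $X(p)\in T_pM=\ker dh_p$, and the fold hypothesis gives $d(Xh)_p\bigl(X(p)\bigr)=X^2h(p)\neq 0$, so $d(Xh)_p$ does not vanish on $T_pM$; consequently $T_p(\p M)=\ker dh_p\cap\ker d(Xh)_p$.

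The core of the argument is a one-line evaluation. Plugging $Xh(p)=0$ into \eqref{slisys} gives
\[
\widetilde Z(p)=\frac{Yh(p)\,X(p)-Xh(p)\,Y(p)}{Yh(p)-Xh(p)}=X(p).
\]
Since $\widetilde Z(p)\in T_pM$, transversality of $\widetilde Z$ to $\p M$ at $p$ — which, both objects lying inside the surface $M$, means precisely $\widetilde Z(p)\notin T_p(\p M)$ — follows as soon as $d(Xh)_p\bigl(\widetilde Z(p)\bigr)\neq 0$. But
\[
d(Xh)_p\bigl(\widetilde Z(p)\bigr)=d(Xh)_p\bigl(X(p)\bigr)=X^2h(p)\neq 0
\]
by the fold condition. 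Hence $\widetilde Z(p)\notin\ker d(Xh)_p\supset T_p(\p M)$ (and in particular $\widetilde Z(p)\neq 0$), which is exactly the transversality asserted.

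The argument is essentially formal; the only place asking for a little care is the bookkeeping that identifies $\p M$ near a regular fold with the corresponding fold curve and certifies it to be an honest curve, together with checking that the several symmetric cases ($X$ versus $Y$, $M^s$ versus $M^e$) really do reduce to the one treated. I do not anticipate any genuine obstacle.
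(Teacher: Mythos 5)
Your argument is correct. Note, however, that the paper does not prove this proposition at all: it is quoted from the literature, with the proof deferred to \cite{T}, so there is no in-text argument to compare against. Your computation is the standard (and essentially the only natural) one: at a fold of $X$ the numerator and denominator of \eqref{slisys} collapse so that $\widetilde Z(p)=X(p)$, and the fold condition $X^2h(p)=d(Xh)_p(X(p))\neq 0$ then says precisely that this vector leaves the fold curve $\{h=0,\ Xh=0\}$, whose tangent space is $\ker dh_p\cap\ker d(Xh)_p$. The bookkeeping you flag is handled correctly: $Yh(p)>0$ keeps the denominator of \eqref{slisys} nonzero near $p$ and identifies $\p M^s$ locally with the fold curve of $X$, and the remaining cases (fold of $Y$, escaping region) follow by the same one-line evaluation, e.g. $\widetilde Z(p)=Y(p)$ when $Yh(p)=0$ and $Xh(p)\neq 0$. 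Your write-up thus supplies a clean, self-contained verification of a fact the paper only cites.
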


In \cite{NT} it has been introduced the concept of {\it sliding Shilnikov orbits}, and some of their properties were studied. In what follows we give the definition of this object and a result about periodic orbits near it.
\begin{definition}[Sliding Shilnikov orbit]\label{defshil}
Let $Z=(X,Y)$ be a 3D discontinuous vector field having a hyperbolic pseudo saddle--focus $p_0\in  M^{s}$ $($resp. $p_0\in  M^{e})$. We assume that there exists a tangential point $q_0\in\p M^s$ $($resp. $q_0\in\p M^e)$ which is a visible fold point of the vector field $X$ such that
\begin{itemize}
\item[$(j)$] the orbit passing through $q_0$ following the sliding vector field $\widetilde Z$ converges to $p_0$ backward in time $($resp. forward in time$)$;

\item[$(jj)$] the orbit starting at $q_0$ and following the vector field $X$ spends a time $t_0>0$ $($resp. $t_0<0)$ to reach $p_0$.
\end{itemize}
So through $p_0$ and $q_0$ a sliding loop $\G$ is characterized. We call $\G$ a {\it sliding Shilnikov orbit} $($see Figure \ref{slidingshil}$)$.  Accordingly we denote $\G^+=\G\cap\{u\in K:\, h(u)>0\}$ and $\G^s=\G\cap M^s$.
\end{definition}

\begin{theorem}[\cite{NT}]\label{t1}
Assume that $Z=(X,Y)\in\Omega^r$ $($with $r\geq0)$ has a sliding Shilnikov orbit $\G$. Then every neighbourhood $G\subset\R^3$ of $\G$ contains infinitely many sliding periodic orbits of $Z$;
\end{theorem}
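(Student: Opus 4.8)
The plan is to reduce the statement to producing infinitely many fixed points of a one--dimensional first return map defined along $\G$. Since $M=h^{-1}(0)$ is a surface, $M^s$ is an open region of it and $\p M^s$ is a curve, which near $q_0$ coincides with the fold curve $\{Xh=0\}$ of $X$. I would fix a small arc $\mathcal U\subset\p M^s$ with $q_0$ in its interior and describe the first return map $\pi$ on $\mathcal U$ as a composition of three transitions. First, since $q_0$ is a visible regular fold of $X$, for $x\in\mathcal U$ the $X$--orbit through $x$ enters $\{h>0\}$, and by continuous dependence on initial conditions together with transversality of the $X$--flow to $M$ near $p_0$ it returns, after a time close to $t_0$, to a curve $\g\subset\overline{M^s}$ through $p_0$; this yields a continuous map $P\colon\mathcal U\to\g$ with $P(q_0)=p_0$. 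I parametrise the part of $\g$ near $p_0$ by $s>0$ so that the corresponding point $x(s)\in\mathcal U$ tends to $q_0$ and $\mathrm{dist}(P(x(s)),p_0)$ is comparable to $s$ as $s\to 0^+$. Second, following the sliding field $\widetilde Z$ from $P(x(s))$: because $p_0$ is an unstable hyperbolic focus of $\widetilde Z$, this orbit spirals outward and leaves a fixed small disk $D$ around $p_0$ in $M$ through the circle $C_1=\p D$ at a point $e(s)$. Third, by Proposition \ref{transv} the field $\widetilde Z$ is transverse to $\p M^s$ at $q_0$; hence the $\widetilde Z$--orbit of the point $y_0:=\G^s\cap C_1$ meets $q_0$ transversally, and a small sub--arc $A\subset C_1$ around $y_0$ is carried onto $\mathcal U$ by a homeomorphism $\Psi$, so that $\pi(x(s))=\Psi(e(s))$ is defined exactly when $e(s)\in A$. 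By the Filippov convention at a visible regular fold (the Remark above), concatenating the $X$--arc from a point $x^*\in\mathcal U$ to $P(x^*)$ with the $\widetilde Z$--arc from $P(x^*)$ back to $x^*$ produces an honest sliding periodic orbit of $Z$ whenever $\pi(x^*)=x^*$; so it suffices to exhibit infinitely many fixed points of $\pi$.

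The core of the argument is the study of $e(s)$ as $s\to 0^+$. The level sets of $e$ are the arcs, inside $D$, of backward $\widetilde Z$--orbits, and since $p_0$ is an unstable focus each of them spirals into $p_0$; as $\g$ also accumulates at $p_0$ it must meet each such arc infinitely often near $p_0$, so $e(s)$ winds around $C_1$ infinitely many times and, in a continuous lift, the accumulated angle tends to infinity as $s\to 0^+$. Using that near an unstable focus the angular displacement dominates for points close enough to $p_0$, I would extract, for every sufficiently large $n$, a maximal subinterval $I_n$ with $\overline{I_n}\subset\mathcal U$ and $I_n$ shrinking to $\{q_0\}$ as $n\to\infty$ (so the $I_n$ are pairwise disjoint), on which $s\mapsto e(s)$ sweeps the arc $A$ monotonically and onto; composing with $\Psi$, the restriction $\pi|_{I_n}$ maps $I_n$ onto all of $\mathcal U$. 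A continuous map carrying an interval onto a strictly larger one takes values on both sides of each of its points, so $\pi$ has a fixed point $x_n^*\in I_n$; the $I_n$ being pairwise disjoint, the $x_n^*$ are pairwise distinct, as are the associated sliding periodic orbits.

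It then remains to localise the orbits. Given a neighbourhood $G\subset\R^3$ of $\G$, the pieces of each constructed orbit are $X$--arcs which are $C^0$--close to $\G^+$, spirals which stay inside the small disk $D$ around $p_0$, and $\widetilde Z$--arcs from $C_1$ back to $\mathcal U$ which are $C^0$--close to the compact part of $\G^s$ lying outside $D$; all of these lie in $G$ once $\mathcal U$, the disk $D$, and the lower bound $N_0$ on $n$ are taken small, respectively large, enough. This produces infinitely many sliding periodic orbits of $Z$ in $G$.

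The main obstacle I anticipate is the focus analysis: making rigorous that an unstable pseudo--focus forces $e(s)$ to wind infinitely, and, more delicately, that for each large $n$ the interval $I_n$ is mapped by $\pi$ \emph{onto the whole} section $\mathcal U$ rather than merely across itself. This is exactly where the well--posedness and transversality of the sliding vector field (Proposition \ref{transv}) enter, and it is also where the weak regularity permitted in the hypothesis ($r\ge 0$) must be dealt with, since no smooth normal form for $p_0$ is available and the spiralling has to be controlled by topological means.
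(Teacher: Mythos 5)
Your argument is essentially correct, but be aware that the paper itself contains no proof of this statement: Theorem \ref{t1} is imported from \cite{NT}, and inside the present paper it is only recovered a posteriori as a corollary of Theorem \ref{theo:topological} (each $\Lambda_k$ carries a full shift, hence fixed points and periodic points of every period for $\pi$, hence infinitely many closed sliding orbits accumulating on $\G$). Your route is the direct, elementary one, and it reproduces precisely the geometric construction the paper does carry out in subsection \ref{1streturn} and in the proof of Proposition \ref{prop:pi.expands}: your section $\g$ through $p_0$ is the paper's $\mu_r=\T(\g_r)$, your infinite winding of the exit point $e(s)$ is the paper's assertion that the backward $\widetilde Z$--saturation of $\g_r$ meets $\mu_r$ in infinitely many components $J_i\to\{p_0\}$, and your intervals $I_n$ are the paper's $I_i=\T^{-1}(J_i)$, each stretched by $\pi$ across the whole section. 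What your approach buys is a self-contained existence proof by an intermediate-value/covering argument; what the paper's approach buys is much more (a conjugacy to a Bernoulli shift with infinite entropy, of which Theorem \ref{t1} is a weak consequence). Two points to tighten: (i) you neither need, nor with the stated regularity can you generally obtain, that $e(s)$ sweeps the arc $A$ \emph{monotonically} on $I_n$ --- it suffices that a continuous angular lift of $e(s)$ traverses a full period on a connected component $I_n$, so that $\pi(x(\cdot))$ attains both endpoints of $\mathcal U$ there and the fixed point follows from connectedness, which also disposes of your worry about controlling the spiralling by purely topological means; (ii) the concern about $r\ge 0$ is legitimate but is a defect of the statement as quoted rather than of your argument, since for $r=0$ neither the visible-fold condition $X^2h(q_0)>0$ nor hyperbolicity of $p_0$ for $\widetilde Z$ is meaningful in the usual sense, and the hypotheses must be read, as in \cite{NT}, as encoding the qualitative transversality and outward-spiralling behaviour your proof actually uses.
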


When $p_0\in M^s$ and $q_0\in\p M^s$ the solutions of $\dot u=Z(u)$ are uniquely defined forward time in a small neighbourhood of $\G$, while for $p_0\in M^e$ and $q_0\in\p M^e$ the solutions of $\dot u=Z(u)$ are uniquely defined backward time but not forward. So throughout this paper we shall concentrate our attention in the case $p_0\in M^s$ and $q_0\in\p M^s$.

In what follows we present an example of a $2$--parameter family of discontinuous piecewise linear differential system $Z_{\al,\beta}$ admitting, for every positive real numbers $\al$ and $\beta$, a sliding Shilnikov orbit $\G_{\al,\be}$. This family was studied in \cite{NT}.

\begin{equation}\label{s1}
Z_{\al,\beta}(x,y,z)=\left\{\begin{array}{ll}
X_{\al,\beta}(x,y,z)=\left(
\begin{array}{c}
-\al\\
x-\beta\\
y-\dfrac{3\beta^2}{8\al}
\end{array}
\right)&\textrm{if}\quad z>0,\vspace{0.2cm}\\

Y_{\al,\beta}(x,y,z)=\left(
\begin{array}{c}
\al\\
\dfrac{3\al}{\beta}y+\beta\\
\dfrac{3\beta^2}{8\al}
\end{array}
\right)&\textrm{if}\quad z<0.
\end{array}\right.
\end{equation}
The switching manifold for system \eqref{s1} is given $\Sigma=\{z=0\}$, and decomposed as $\Sigma=\ov{\Sigma^c}\cup\ov{\Sigma^s}\cup\ov{\Sigma^e}$ being
\[
\begin{array}{llll}
\Sigma^c=\left\{(x,y,0):\,y>\dfrac{3\be^2}{8\al}\right\}, &\Sigma^s=\left\{(x,y,0):\,y<\dfrac{3\be^2}{8\al}\right\}&\textrm{and}&\Sigma^e=\emptyset.
\end{array}
\]
The origin $p_0=(0,0,0)$ is a hyperbolic pseudo saddle--focus of system $Z_{\al,\beta}$ \eqref{s1} in such way that its projection onto $\Sigma$ is an unstable hyperbolic focus of the sliding vector field $\widetilde Z_{\al,\beta}$ \eqref{slisys} associated with \eqref{s1}. Moreover there exists a sliding Shilnikov orbit $\G_{\al,\beta}$ connecting $p_0=(0,0,0)$ to itself and passing through the fold--regular point $q_0=\big(3\be/2,3\be^2/(8\al)\big)$.

\subsection{The first return map}\label{1streturn} The behaviour of a system close to a sliding Shilnikov orbit can be understood by studying the first return map in a small neighbourhood $I\subset\p M^s$ of $q_0$. In what follows we shall define this map.

For $\xi\in M^s$ and $z\in\R^3$, let the functions $\f_s(t,\xi)$ and $\f_X(t,z)$ denote the solutions of the differential systems induced by $\widetilde Z$ and $X$, respectively, such that $\f_s(0,\xi)=\xi$ and $\f_X(0,z)=z$. 

Take $\gamma_r=\ov{B_r(q_0)\cap\p M^s}$. Here $B_r(q_0)\subset  M$ is the planar ball with center at $q_0$ and radius $r$. Of course $\gamma_r$ is a branch of the fold line contained in the boundary of the sliding region $\p M^s$. From definition \ref{defshil}, $\f_X(t_0,q_0)=p_0\in M^s$, moreover, the intersection between $\Gamma^+$ and $M$ at $p_0$ is transversal. So taking $r>0$ sufficiently small, we find a function $\tau(\xi)>0$, defined for $\xi\in\gamma_r$, such that  $\tau(q_0)=t_0$ and $\f_s(\tau(\xi),\xi)\in M^s$ for every $\xi\in\gamma_r$. 

The forward saturation of $\gamma_r$ (resp. $\p M^s,$ $\gamma_r^+,$ and $\gamma_r^-$) through the flow of $X$ meets $M$ in a curve $\mu_r$, that is $\mu_r=\{\f_s(\tau(\xi),\xi):\,\xi\in\gamma_r\}$. So let $\T:\gamma_r\rightarrow \mu_r$ denote the bijection $\T(\xi)=\f_s(\tau(\xi),\xi)$.

Now, Proposition \ref{transv} implies that the intersection between $\G^s$ and $\p M^s$ at $q_0$ is transversal. So in addition, taking $r>0$ small enough, the backward saturation $S_r$ of $\gamma_r$ through the flow of $\widetilde Z$ converges to $p_0$. Therefore
\[
S_r\cap\mu_r=\bigcup_{i=1}^{\infty} J_i,
\]
where $J_i\cap J_j=\emptyset$ if $i\neq j$ and $J_i\to \{p_0\}$. For each $i=1,2,\ldots$, we take $I_i=\T^{-1}(J_j)$. Clearly $I_i\cap I_j=\emptyset$ if $i\neq j$ and $I_i\to\{q_0\}$. Set 
\begin{equation}\label{CI}
\CU_r=\bigcup_{i=1}^{\infty} I_i.
\end{equation}
Therefore the first return map $\pi:\CU_r\rightarrow \CU_r$ is well defined. Moreover it can be taken as
\[
\pi(\xi)=\f_s\big(\tau_s(\f_X(\tau(\xi),\xi)),\f_X(\tau(\xi),\xi)\big),
\]
where, for each $\xi'\in\mu\setminus\{p_0\}$,  $\tau_s(\xi)>0$ denote the time such that $\f_s(\tau_s(\xi'),\xi')\in\g_r$. 
%
%
%
%
%
%
%

The next result estimates the derivative of the first return map.
\begin{proposition}\label{prop:pi.expands}
Consider $\CU_r$ as defined in \eqref{CI}. There exists $r>0$ sufficiently small such that $|\pi'(\xi)|>1$ for every $\xi\in \CU_r$.
\end{proposition}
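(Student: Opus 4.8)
The plan is to decompose the first return map $\pi$ as the composition of three pieces and to show that the crucial one---the passage near the pseudo saddle--focus $p_0$---is strongly expanding, while the other two only distort the derivative by a bounded factor that can be absorbed by shrinking $r$.

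\textbf{Step 1: Decompose $\pi$ and identify the benign factors.} Write $\pi = \Psi \circ \Phi \circ \T$, where $\T:\g_r\to\mu_r$, $\T(\xi)=\f_s(\tau(\xi),\xi)$, is the (finite-time) sliding transition onto $\mu_r$; then $\Phi$ is the flow of $X$ from $\mu_r$ back to a neighbourhood of $q_0$ (really the identification built into the construction of the $I_i$), which is again a finite-time smooth map with nonvanishing derivative; and $\Psi$ is the return to $\g_r$ along the sliding flow, $\Psi(\xi')=\f_s(\tau_s(\xi'),\xi')$. On any compact piece of $\g_r$ bounded away from $q_0$, the maps $\T$ and $\Phi$ are $\CC^1$ with derivative bounded above and below by positive constants independent of the point (this is where Proposition~\ref{transv}, the transversality of $\widetilde Z$ to $\p M^s$ at $q_0$, is used to guarantee $\tau(\xi)$ and the relevant foliations are well defined and regular for small $r$). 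So the whole question reduces to estimating $|\Psi'|$, i.e.\ the effect of the sliding flow as it spirals out from $p_0$.

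\textbf{Step 2: Quantify the expansion of the sliding flow near the unstable focus.} Since $p_0$ is a hyperbolic pseudo saddle--focus, its projection is an \emph{unstable} hyperbolic focus of the planar sliding vector field $\widetilde Z$, with eigenvalues $a\pm bi$, $a>0$, $b\neq 0$. I would linearize (or use a $\CC^1$ Hartman--Grobman / Poincar\'e normal form) near $p_0$ and compute the Poincar\'e-type map along the transversal $\g_r$: a trajectory starting on $S_r$ in the piece $J_i$ spirals around $p_0$ roughly $i$ times before leaving, so its return time $\tau_s$ is of order $2\pi i/b$, and the linear part contributes a multiplicative expansion of order $e^{a\tau_s}=e^{2\pi a i/b}$, which tends to $+\infty$ as $i\to\infty$. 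Thus on the $i$-th branch $I_i$ the derivative $|\Psi'|$ is bounded below by a constant times $e^{2\pi a i / b}$; in particular there is $i_0$ and a uniform lower bound $>$ any prescribed constant on all branches with $i\geq i_0$. The finitely many remaining branches $I_1,\dots,I_{i_0-1}$ are compact and bounded away from $q_0$, so on those we may simply take $r$ small enough that the (finite, explicit) derivative of $\pi$ restricted there also exceeds $1$; alternatively one observes that shrinking $r$ deletes all but the tail of the branches.

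\textbf{Step 3: Combine and conclude.} Putting Steps 1 and 2 together, on each branch $I_i\subset\CU_r$ we get $|\pi'(\xi)| = |\Psi'|\cdot|\Phi'|\cdot|\T'| \geq c\, e^{2\pi a i/b}$ for a constant $c>0$ depending only on $r$ and the fixed smooth data; choosing $r>0$ small forces this quantity to exceed $1$ on every branch, giving $|\pi'(\xi)|>1$ for all $\xi\in\CU_r$, as claimed.

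\textbf{Main obstacle.} The delicate point is Step 2: making the ``spirals $i$ times, so expands by $e^{2\pi a i/b}$'' heuristic rigorous when $\widetilde Z$ is only $\CC^r$ with possibly small $r$ (the hypothesis allows even $r=0$ in Theorem~\ref{t1}, though for a derivative estimate we of course need $\widetilde Z\in\CC^1$). One must control the nonlinear remainder in the return map uniformly over the infinitely many loops---standard for a $\CC^1$-linearizable focus, but one has to be careful that the sliding vector field obtained from \eqref{slisys} is genuinely smooth near $p_0$ (it is, since $Yh-Xh\neq 0$ there) and that the transversal $\g_r=\ov{B_r(q_0)\cap\p M^s}$, a branch of the fold line rather than an arbitrary section, is still transverse to the spiralling flow; this last is exactly the content of Proposition~\ref{transv}. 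Once uniform hyperbolic estimates for the focus are in hand, everything else is bounded-distortion bookkeeping.
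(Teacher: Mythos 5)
Your proposal is correct and follows essentially the same route as the paper: the paper likewise factors $\pi$ through the finite-time transition $\rho_X$ induced by the flow of $X$ from $\g_R$ to $\mu_R$, the $\ov k$-fold return map $\rho_+^{\ov k}$ of the sliding flow near the unstable focus (linearized as $H_{+}(\la^{\ov k} H_{+}^{-1}(\cdot))$ with $|\la|>1$), and a final sliding transition $\widetilde\rho$, bounding the two transition derivatives below by positive constants $\al_X,\widetilde{\al}$ and then shrinking $r$ so that every branch makes at least $k_0$ turns with $\al_X\widetilde{\al}|\la|^{k_0}>1$. The only cosmetic discrepancy is in your Step 1, where the roles of the two flows are swapped (the first leg $\g_r\to\mu_r$ is realized by the flow of $X$, not by the sliding flow; the formula $\T(\xi)=\f_s(\tau(\xi),\xi)$ in the paper is a typo for $\f_X(\tau(\xi),\xi)$), which does not affect the substance of the argument.
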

\begin{proof}

Denote by $\g^{+}_r\subset \g\setminus\{q_0\}$ (resp. $\g^{-}_r\setminus\{q\}$) the segment on the right (resp. left) of $q_0$. Accordingly denote $\mu_r^ {\pm}=\T(\g_r^{\pm})$.

For each $R>0$, the focus $p_0\in M^s$ of the sliding vector field $\widetilde Z$ is contained in $\mu_{R}$ which is traversal to the flow of $\widetilde Z$. So there exists $R_0>0$ such that for every $0<R\leq R_0$  it  is well defined a first return map on $\mu_{R}^{+}$ (resp. $\mu_{R}^{-}$), which we denote by $\rho_{\pm}:\mu_{R}^{\pm}\rightarrow \mu_{R}^{\pm}$. Since $p_0$ is a hyperbolic unstable fixed point of $\rho_{\pm}$, there exists a neighbourhood $U\subset \mu_{R_0}$ and homomorphisms $H_{\pm}:U\rightarrow U$ such that $\rho_{\pm}(\zeta)=H_{\pm}(\la H_{\pm}^{-1}(\zeta))$, with $|\la|>1$, and then $\rho_{\pm}^k(\zeta)=H_{\pm}(\la^k H_{\pm}^{-1}(\zeta))$. The radius $R$ can be taken small enough in order to ensure that $\mu_{R}^{\pm}\subset U$.

The backward saturation of $\g_{R}$ through the flow of $\widetilde Z$ intersects $\mu_{R}$ many times, indeed it converges to $p_0$. So denote by $S\subset \mu_{R}$ the first connected component of this intersection which is entirely contained in $U$. Clearly either $S\subset \mu^+$ or $S\subset\mu^-$. Without loss of generality we assume that $S\subset \mu^+$. The flow of $\widetilde Z$ induces a diffeomorphism $\widetilde \rho$ between $S$ and $\g_{R}$. Moreover the flow of $X$ induces a diffeomorphism $\rho_X$ between $\g_R$ and $\mu_{R}$.

Since $\widetilde\rho$ and $\rho_X$ are diffeomorphism, there exists $\widetilde{\al}>0$ and $\al_X>0$ such that $\widetilde{\al}=\min\{|\widetilde{\rho}\,'(\zeta)|:\,\zeta\in S\}$ and $\al_X=\min\{|\rho_X'(\xi)|:\,\xi\in \g_R\}$.

Now given $k_0\in\N$, there exists $0<r<R$ such that $\rho_+^{k}(\mu_r)\cap S=\emptyset$ for $k\leq k_0$. Particularly, we take $k_0$ such that $\al_X\widetilde{\al}|\la|^{k_0}>1$.

For $\xi\in\mathcal{U}_r$, $\rho_X(\xi)\in \mu_r,$ denote $\ov k=k(\rho_X(\xi))$. Clearly there exists a neighborhood $W\subset \mathcal{U}_r$ of $\xi$ such that $k(\rho_X(w))=\ov k$ for every $w\in W$. Therefore the first return map reads 
\[
\begin{array}{rl}
\pi(\xi)&=\widetilde\rho\circ\rho_+^{\ov k}\circ\rho_X(\xi)\\
&=\widetilde\rho\circ H_{+}(\la^{\ov k} H_{+}^{-1}(\rho_X(\xi))).
\end{array}
\]
Hence $|\pi'(\xi)|\geq \al_X\widetilde{\al}|\la|^{\ov k}>1,$ because $\ov k>k_0$. It concludes this proof.
\end{proof}

\section{Basic facts on Bernoulli Shifts}

Our main result (Theorem \ref{theo:topological}) completely characterizes, from the topological point of view, the dynamical around the homoclinic orbit. As a consequence we are able, on Corollary \ref{theo:ergodic}, to give a complete characterization from the ergodic point of view as well. Therefore in order to one fully understand this ergodic point of view we describe some basic facts from the Ergodic Theory. The reader may find a good introduction to the subject on \cite{KH} and the references therein.

Let $(X,\mathcal A,\mu)$ be a probability space and $f:X \rightarrow X$ be a measurable function. We say that a measurable set $B \subset X$ is $f$-invariant if
\[f^{-1}(B)=B \operatorname{mod} 0,\]
where $\operatorname{mod} 0 $ means that except a measure zero set both sets are equal.
We say that $f$ preserves the measure $\mu$, or that $f$ is $\mu$-invariant, when 
\[\mu(f^{-1}(B)) = \mu(B)\]
for every measurable set $B \subset X$. \\

Given a measurable preserving function $f:X \rightarrow X$ in a probability space $(X,\mu)$, we say that $f$ is ergodic if, and only if, for every $f$-invariant measurable set $B\subset X$ we have
\[\mu(B) = 0 \text{ or } \mu(B)=1.\]

Ergodicity is a very important property in Dynamical Systems and it roughly means that the dynamics cannot be broken in smaller simple dynamics. Hence it actually implies a certain type of chaos for a system with respect to a given measure. Bernoulli shifts have a fairly simple description and still amazingly they are the most chaotic possible examples. We now describe the Bernoulli shifts.

Given any natural number $k \in \mathbb N$, we define the space of all sequences of natural numbers between $0$ and $k-1$ by
\begin{equation}\label{sigmak}
\Sigma_k = \{0,1,...,k-1 \}^{\mathbb N}.
\end{equation}
Due to a more intuitive approach in the proof of Theorem \ref{theo:topological} we will need the set 
\begin{equation}\label{sigmak*}
\Sigma^{*}_k = \{1,...,k \}^{\mathbb N},
\end{equation}
which we point out it is not a standard notation on symbolic dynamics.

These are a countable product space where each coordinate is a discrete compact space. By Tychonoff's theorem $\Sigma_k$ (respectively $\Sigma^*_k$) is compact with the product topology induced by the discrete topology of $\{0,1,...,k-1\}$  (respectively $\{1,...,k\}$). A metric in this space which generates the product topology is described by the distance

We define the distance on $\Sigma_k$ by: 
\begin{eqnarray*}
 d:\Sigma_k \times \Sigma_k &\rightarrow& \mathbb R\\
 d(\alpha, \beta) &=& \left\{ \begin{array}{c}
                               0 \; \text{ se } \alpha=\beta\\
                               \left(\frac{1}{2}\right)^n ,\; n=max\{ a \in \mathbb N: \alpha(i)=\beta(i), |i|\leq a \}
                              \end{array}
 \right.
\end{eqnarray*}

Acting on $\Sigma_k$ we have the so called one-sided Bernoulli shift $\sigma: \Sigma_k \rightarrow \Sigma_k$, which simply operates a left-translation on each sequence, that is, given any sequence $(x_n)_{n \in \mathbb N}$ the image if this sequence is the sequence
\[\sigma((x_n)_{n \in \mathbb N}) = (x_{n+1})_{n \in \mathbb N}.\]

\begin{definition}
Given $n\in \mathbb N$ and $m$ values $a_1,a_2,..,a_m \in \{ 0,1,...,k-1\}$. We denote by $C(n; a_1,a_2,...,a_m)$ the set defined by
\[C(n; a_1,a_2,...,a_m) = \{ (x_i)_{i\in \mathbb N} : x_{n+1} = a_1, x_{n+2} = a_2,...,x_{n+m}=a_m\}.\]
The sets of this form are called cylinders. 
\end{definition}

Let $\mathfrak C$ be the family of all cylinders in $\Sigma_k$. This family generates a $\sigma$-algebra $\mathcal C$, which will be the standard $\sigma$-algebra to work with on $\Sigma_k$. 

To define a measure on $\Sigma_k$ we take any probability vector $p=(p_0,...,p_{k-1})$ (i.e. $p_i \in [0,1]$ and $\sum_i p_i = 1$). The probability vector $p$ defines, in a trivial way, a measure $p$ on $\{0,1,...,k-1\}$. Thus, we can take $\mu$ as the product measure $\mu = p^{\mathbb N}$ on $\Sigma_k$. This measure is characterized by its values on cylinders. Given a cylinder  $C(n; a_1,a_2,...,a_m)$, one can easily see that
\[\mu(C(n; a_1,a_2,...,a_m)) = p_{a_1}\cdot p_{a_2} \cdot ... \cdot p_{a_m}.\]

The measure $\mu$ is called a \textit{Bernoulli measure}. It is easy to see that $\mu$ is $\sigma$ invariant for any Bernoulli shift $\sigma:\Sigma_k \rightarrow \Sigma_k$. Also, the system $(\sigma,\mu)$ is ergodic. A measurable automorphism $f:X \rightarrow X$ of a probability space $(X,\mu)$ is called a \textit{Bernoulli automorphism} if it is isomorphic to a Bernoulli shift $\sigma:\Sigma_k \rightarrow \Sigma_k$ for some $k\in \mathbb N$. By an isomorphism we mean a bimeasurable function that conjugates the dynamics and takes $\mu$ to a Bernoulli measure. That is a Bernoulli automorphism preserves all ergodic properties of a Bernoulli shift.

The following proposition is a very well-known fact from the theory of Bernoulli shifts (e.g. \cite{KH}).

\begin{proposition} \label{prop:periodos}
For any $k \in \mathbb N$, the Bernoulli shift $\sigma:\Sigma_k \rightarrow \Sigma_k$ has periodic orbits of all periods and the set of transitive points is a residual set.
\end{proposition}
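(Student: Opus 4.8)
The plan is to handle the two assertions separately, both by elementary arguments that exploit only the finiteness of the alphabet and the compactness of $\Sigma_k$.

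\textbf{Periodic orbits.} For a finite word $w=(a_0,\dots,a_{n-1})$ with $a_i\in\{0,\dots,k-1\}$, let $\overline w\in\Sigma_k$ be the sequence obtained by repeating $w$ periodically; then $\sigma^{n}(\overline w)=\overline w$, so $\sigma$ has exactly $k^{n}$ points fixed by $\sigma^{n}$. To produce a point whose \emph{minimal} period is exactly $n$ I would argue by counting: if $\sigma^{n}(x)=x$ then the minimal period of $x$ divides $n$, and if it is not $n$ then it is a proper divisor $d\mid n$, in which case $x=\overline w$ for the word $w$ consisting of the first $d$ coordinates of $x$. Since every proper divisor of $n$ is at most $\lfloor n/2\rfloor$, the number of such points is at most
\[
\sum_{d\mid n,\ d<n}k^{d}\ \le\ \sum_{d=1}^{\lfloor n/2\rfloor}k^{d}\ <\ k^{\lfloor n/2\rfloor+1}\ \le\ k^{n}
\]
for every $k\ge 2$. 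Hence some point is fixed by $\sigma^{n}$ but by no $\sigma^{d}$ with $d<n$, i.e. $\sigma$ has a periodic orbit of (minimal) period $n$. (For $k=1$ the space is a single point and there is nothing to prove.)

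\textbf{Transitive points.} Here I would invoke the Baire category theorem. Because the alphabet is finite, the cylinders form a \emph{countable} basis $\{U_j\}_{j\in\mathbb N}$ for the product topology, and $\Sigma_k$ is a compact metric space, hence a Baire space. A point $x$ has dense $\sigma$-orbit if and only if for every $j$ there is $n\ge 0$ with $\sigma^{n}(x)\in U_j$, so the set $\mathcal T$ of transitive points is
\[
\mathcal T=\bigcap_{j\in\mathbb N}\ \bigcup_{n\ge 0}\sigma^{-n}(U_j).
\]
Each set $\bigcup_{n\ge 0}\sigma^{-n}(U_j)$ is open since $\sigma$ is continuous, and I would check that it is dense as follows: given a nonempty cylinder $V$ that prescribes the first $m$ coordinates and given $U_j$ prescribing coordinates on a finite index set, concatenate a finite word realizing $V$ with (after the appropriate shift) a finite word realizing $U_j$, completing arbitrarily; the resulting sequence lies in $V$ and some iterate lands in $U_j$, so $V\cap\bigcup_{n}\sigma^{-n}(U_j)\neq\emptyset$. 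Therefore $\mathcal T$ is a countable intersection of dense open sets, i.e. a residual set (a dense $G_\delta$), by Baire's theorem.

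\textbf{Expected difficulty.} Neither part poses a real obstacle; this is a classical fact. The only points that need a little care are the counting step that upgrades ``fixed by $\sigma^{n}$'' to ``minimal period $n$'' (together with the harmless exclusion of $k=1$), and the verification that each $\bigcup_{n}\sigma^{-n}(U_j)$ is dense, which is exactly the concatenation-of-words argument sketched above.
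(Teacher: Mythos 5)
Your proof is correct. Note, however, that the paper does not prove this proposition at all: it is stated as a classical fact with a pointer to Katok--Hasselblatt, so there is no argument in the paper to compare against. Your two arguments are the standard ones and both check out: the counting step correctly upgrades ``fixed by $\sigma^n$'' to ``minimal period $n$'' (for $n\ge 2$ and $k\ge 2$ the points whose minimal period is a proper divisor number at most $\sum_{d=1}^{\lfloor n/2\rfloor}k^d<k^n$, and for $n=1$ there is nothing to exclude), and the Baire-category argument for residuality of the transitive points is exactly right, the density of each $\bigcup_{n\ge 0}\sigma^{-n}(U_j)$ being guaranteed by concatenation of the words defining the two cylinders. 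The only thing worth flagging is cosmetic: your writeup is a complete, self-contained proof of a statement the authors deliberately left as a citation, which is perfectly acceptable but more than the paper itself supplies.
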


Along the paper we will work with the spaces $\Sigma_2 \times \Sigma_k^{*}$. For each $k \in \mathbb N^{*}$, on each $\Sigma_2 \times \Sigma_k^{*}$ we have the shift on two-coordinates
\begin{equation}\label{sigmakk}
\sigma ((x_n)_n ,(y_m)_m) = ((x_{n+1})_n, (y_{m+1})_m).
\end{equation}

This shift on two coordinates is isomorphic to a standard shift on $\Sigma_{2\cdot k}$, so it is also a Bernoulli automorphism. It is also a direct fact that the two-coordinates shift above is topologically conjugate to the standard shift on $\Sigma_{2\cdot k}$, thus the conclusions of Proposition \ref{prop:periodos} are also true for the two-coordinates shift.

Let us define 
\begin{equation}\label{sigmab}
\Sigma^{b} := \bigcup_{k \in \mathbb N}\Sigma^{*}_k=\{ \{ x_i\}_i | \exists L \in \mathbb R \text{ s.t. }|x_i|\leq L \; \forall i \}.
\end{equation}
We consider the two-coordinates shift
\begin{equation}\label{sigma}
\sigma : \Sigma_2 \times \Sigma^{b} \rightarrow \Sigma_2 \times  \Sigma^{b}.
\end{equation}

To make notations easier, we will denote by $\sigma_k$ the restriction of $\sigma$ to the space $ \Sigma_2 \times \Sigma^{*}_k$. Hence $\sigma_k: \Sigma_2 \times \Sigma^{*}_k \rightarrow \Sigma_2 \times \Sigma^{*}_k.$

The space of sequences $\Sigma_2 \times \Sigma^{*}_k$ is naturally endowed with the product topology, which is the coarsest topology for which the cylinders are open set. And the topology on $\Sigma_2 \times \Sigma^b$ is the coarset topology having the cylinders of $\Sigma_2 \times \Sigma^{*}_k$ as open sets $\forall k \in \mathbb N$. We note that $\sigma_k$ is defined on a compact space while $\sigma$ is not.

One of the most useful invariants on Dynamical Systems and Ergodic Theory is the topological entropy. One may think of topological entropy as a measurement of chaos. Topological entropy has a not so straight definition (which we recomend the reader to take a look \cite{KH}) but fortunately to our context it can be associated to the growth of periodic points. Therefore to purpose we consider the topological entropy as follows. For a compact $\pi$-invariant set $\Omega \subset \Sigma_2 \times \Sigma^b$ we define the topological entropy of $\pi|\Omega$ as
$$h_{\sigma|\Omega}:= \lim_{n \rightarrow \infty}\frac{1}{n} \# Per_n(\sigma|\Omega),$$
 where $\#Per_n(f)$ means the number of periodic point of period $n$. It is not difficult to prove that $h_{\sigma_k}=log(2k)$.
 
\section{Proof of Theorem \ref{theo:topological}}\label{sec:proof}

Consider the Filippov system $\dot u=Z(u)=(X,Y)(u)$ given by \eqref{omega}, and denote by $\phi(t,v)$ its (Filippov) solution such that $\phi(0,v)=v$. Assume that $Z$ contains a sliding Shilnikov orbit $\G,$ and let $p_0\in M^s$ and $q_0\in\p M^s$ be as in Definition \ref{defshil}. We consider a neighbourhood $I\subset \p M^s$ of $q_0$ for which the first return map $\pi$ is well defined. We assume that $I$ has end points $q_1$ and $q_2$ and we denote $I=[q_1,q_2]$. The forward saturation of $I$ through the flow of $X$ intersects $M$ in a curve $J.$ 

Let us call by $\Lambda$ the set of points in $I$ which return infinitely often to $I$ through the forward flow of $Z$. That is 
$$\Lambda = \{\xi\in I \; | \; \exists \{t_n\}_{n \in \mathbb N}, t_n \rightarrow \infty, \phi(t_n,\xi)\in I \}.$$
We note that Theorem \ref{t1} guarantees that $\Lambda\neq\emptyset$.

Call $I_0 := [q_1,q_0]$, $I_1 := [q_0,q_2]$, $J_0$ and $J_1$ denote the intersection of $M$ with the forward saturation of $I_0$ and $I_1$, respectively. Given a point $ \xi \in \Lambda$ we denote by $\eta_*(\xi)$, $* \in \{0,1 \}$ the number of intersections that the forward flow orbit of $\xi$ has with $J_*$ before returning to $\Lambda \subset I$, that is,
\[\eta_*(\xi) := \# \{\varphi(t,\xi) \cap J_* : 0<t<t_{\xi}    \} .\]
where $t_\xi$ is the first return time of $\xi$ on $\Lambda$. The intersections detected by the function $\eta_*(\xi)$ are occurring on the sliding region $M^s$ of the switching surface $M$. That means, for a point $\xi\in\p M^s$ sufficiently close to $q_0$, the flow starting at $\xi$ travels forward in time following the vector field $X$. After a finite time it reaches transversally the switching surface at a point of $J\subset M^s$ close to $p_0$. Then the flow follows the sliding vector field $\widetilde Z$  (see \eqref{slisys}), spiralling outward around $p_0$ until reaching the fold line $\p M^s$. Since the curve $J$ is transversal to the sliding vector field $\widetilde Z$  and contains the pseudo saddle--focus $p_0$, the number $\eta_*(\xi)$ is well defined. 
Notice that $\eta_*(\xi)$ counts the amount of times that the flow of $\widetilde Z$ intersects $J_*$, it could, of course, turn around $p_0$ several times more before reaching $\Lambda$ without intersect $J$.

We will construct a map $$h_k:\Sigma_2 \times \Sigma_{k}^* \rightarrow \Lambda$$ that will conjugate the dynamics of $\sigma_k$ with $\pi$ (i.e. $ h_k \circ \sigma_k = \pi \circ h_k$), where $\Sigma_k^* = \{ 1,2, \ldots, k \}^{\mathbb N}$.

Fix a natural number $k>0$ and take a point $$(X,N) = ((x_i)_{i\in \mathbb N} , (n_i)_{i\in \mathbb N}) \in \Sigma_2 \times \Sigma_k^* $$ We will define $h_k((X,N))$ through a limit process.

Define $P_0(X,N)$ as the points which are in $I_{x_0}$, that is  $P_0(X,N) = I_{x_0}$. Define $P_1(X,N)$ as the points which are in $I_{x_0}$ and before arriving by the first return maps to $I_{x_1}$ touches $n_0$ times the segment $J_{x_1}$, that is:
$$P_1(X,N) = \{ \xi \in P_0(X,N)\; | \; \eta_{x_1}(\xi)=n_0, \pi(\xi)\in I_{x_1} \}.$$
In general we define
$$ P_{m+1}(X,N) = \{ \xi \in P_m(X,N) | \eta_{x_{m+1}}(\pi^m(\xi))=n_m, \; \pi^{m+1}(\xi) \in I_{x_{m+1}}\}.$$

Now consider the following set
\begin{equation}\label{P}
P(X,N) : = \bigcap_{i\in \mathbb N} P_i(X,N).
\end{equation}
Notice that $P_i(X,N) \subset P_{i-1}(X,N)$ and each $P_i(X,N)$ is a closed interval. Hence $P{(X,N)}$ is a point or a non-degenerated interval, we want to rule out the non-degenerate interval case. We start with the following. 

\begin{lemma}\label{l1}
 If $P(X,N)\cap P(X', N') \neq \emptyset$, then $P(X,N)= P(X', N')$. In particular $(X,N)=(X',N')$.
\end{lemma}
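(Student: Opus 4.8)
The plan is to prove Lemma~\ref{l1} by showing that two distinct symbol sequences produce sets $P(X,N)$ and $P(X',N')$ with disjoint interiors, and that each such set is in fact a single point. The strategy splits naturally into two parts: (i) if the symbolic data $(X,N)$ and $(X',N')$ differ, then the nested intersections are forced to land in disjoint pieces at the first index where they disagree; and (ii) the intervals $P_m(X,N)$ shrink to a point, which rules out the possibility that two distinct sequences share a common interior point and also makes the set $P(X,N)$ well defined as a single point.

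For part (i), suppose $(X,N)\neq(X',N')$ and let $m$ be the smallest index at which the two sequences differ, i.e. $(x_i,n_{i-1})=(x_i',n_{i-1}')$ for all relevant $i<m$ but either $x_m\neq x_m'$ or $n_{m-1}\neq n_{m-1}'$. First I would observe that $I_0=[q_1,q_0]$ and $I_1=[q_0,q_2]$ overlap only in $\{q_0\}$, and similarly the level sets $\{\xi:\eta_{x}(\pi^{m-1}(\xi))=n\}$ for distinct values of $n$ are disjoint (each such set is, by the spiralling description of the flow following $\widetilde Z$ around $p_0$, a half-open or closed subinterval, and different intersection counts $\eta_x$ correspond to disjoint ``laps'' of the spiral). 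Hence if $x_m\neq x_m'$ then $\pi^{m}$ maps $P_m(X,N)$ into $I_{x_m}$ and $P_m(X',N')$ into $I_{x_m'}$, forcing the intersection of the two sets to be contained in $\pi^{-m}(\{q_0\})$, a single point at most; and if $n_{m-1}\neq n_{m-1}'$ then $P_m(X,N)$ and $P_m(X',N')$ already lie in disjoint $\eta$-level sets, so they are disjoint. The content of the lemma is that this forces $P(X,N)\cap P(X',N')=\emptyset$ once we know these sets are single points, which is part (ii).

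For part (ii), the key input is Proposition~\ref{prop:pi.expands}: the first return map satisfies $|\pi'(\xi)|>1$ on $\CU_r$, in fact the proof of that proposition gives uniform expansion of the form $|(\pi^m)'|\geq (\al_X\widetilde\al|\la|^{k_0})^m$ along any orbit that stays in the relevant region. Since $\pi^m$ restricted to $P_m(X,N)$ is a diffeomorphism onto a subinterval of the fixed-length interval $I_{x_m}$, the length of $P_m(X,N)$ is bounded by (length of $I$) divided by this expansion factor, hence tends to $0$ as $m\to\infty$. Therefore $P(X,N)=\bigcap_m P_m(X,N)$ is a nested intersection of compact intervals of lengths going to $0$, so it is exactly one point. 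Combining with part (i): if $P(X,N)$ and $P(X',N')$ are both single points and they intersect, they coincide, and then by part (i) the sequences cannot differ, so $(X,N)=(X',N')$ and a fortiori $P(X,N)=P(X',N')$.

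The main obstacle I anticipate is the careful bookkeeping in part (i), specifically verifying that the sets $\{\xi\in P_{m-1}:\eta_{x_m}(\pi^{m-1}(\xi))=n,\ \pi^m(\xi)\in I_{x_m}\}$ really are intervals and that distinct values of $(x_m,n)$ give disjoint such sets with at most endpoint overlap. This requires unwinding the definition of $\eta_*$ in terms of the sliding flow spiralling around $p_0$ and checking that the preimages under $\pi^{m-1}$ (a diffeomorphism, hence order-preserving or order-reversing on each $I_i$) of these spiral laps are consecutive, non-overlapping intervals — essentially the same geometric picture that made $\CU_r=\bigsqcup I_i$ a disjoint union of intervals accumulating on $q_0$. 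Once that geometry is in hand, both parts are routine, with the expansion estimate of Proposition~\ref{prop:pi.expands} doing the quantitative work in part (ii).
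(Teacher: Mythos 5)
Your part~(i), in contrapositive form, is essentially the paper's own (one--line) argument: a point of $\Lambda$ has a uniquely determined itinerary, so a common point of $P(X,N)$ and $P(X',N')$ forces $(X,N)=(X',N')$, and then $P(X,N)=P(X',N')$ is immediate. The genuine gap is in your combination step. In the case $x_m\neq x_m'$, your part~(i) only yields $P(X,N)\cap P(X',N')\subseteq \pi^{-m}(\{q_0\})$, i.e.\ it leaves open the possibility of a common point $\xi$ with $\pi^{m}(\xi)=q_0$ sitting on the overlap $I_0\cap I_1=\{q_0\}$; and knowing from part~(ii) that both sets are singletons does nothing to exclude this, since two distinct sequences could a priori produce the \emph{same} singleton $\{\xi\}$. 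So the inference ``they intersect, hence by part~(i) the sequences cannot differ'' is unjustified as written. The missing observation is that no point of $\Lambda$ (hence of any $P(X,N)$) has an iterate equal to $q_0$: the forward orbit of $q_0$ follows $X$ to the pseudo--equilibrium $p_0$ and remains there under the sliding flow, so it never returns to $\partial M^s$, $\pi(q_0)$ is undefined, and any $\xi$ with $\pi^{m}(\xi)=q_0$ fails to belong to $P_{m+1}(X,N)$ (indeed to $\Lambda$). With that one line added, the problematic case disappears and part~(i) alone proves the lemma.

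Once this is repaired, your part~(ii) is superfluous for the present statement; it is in fact the content of the paper's \emph{subsequent} lemma (that $P(X,N)$ is a point), which the paper deduces from the present lemma by a length/pigeonhole argument. Your direct uniform--expansion estimate (lengths of $P_m$ bounded by $|I|\,c^{-m}$ with $c=\alpha_X\widetilde{\alpha}|\lambda|^{k_0+1}>1$ from the proof of Proposition~\ref{prop:pi.expands}) is a legitimate independent route to that fact and avoids the circularity you would incur by citing the paper's later lemma here, but importing it into the proof of this lemma adds machinery without closing the actual hole, which is purely combinatorial and concerns the boundary point $q_0$.
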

\begin{proof}
 That comes directly from the definition of the sets, because $P(X,N)$ is solely defined by stating what the orbit of a point "behaves", hence if a point is also on $P(X',N')$ that means the sets are the same.
\end{proof}

\begin{lemma}
  $\pi(P(X,N))$ is of the form $P(X',N')$
\end{lemma}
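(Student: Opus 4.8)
The plan is to show that applying $\pi$ to the set $P(X,N)$ produces another set of exactly the same combinatorial type, namely $P(X',N')$ where the sequences $(X',N')$ are obtained from $(X,N)$ by a left-shift. First I would recall that $P(X,N) = \bigcap_i P_i(X,N)$ is, by Lemma \ref{l1}, a single point (the non-degenerate interval case having been ruled out, or at worst a closed interval which the map still carries to a closed interval of the same type), and that by construction every point $\xi \in P(X,N)$ has a prescribed symbolic itinerary: it lies in $I_{x_0}$, and for each $m \geq 0$ its $m$-th iterate $\pi^m(\xi)$ lies in $I_{x_m}$ and satisfies $\eta_{x_{m+1}}(\pi^m(\xi)) = n_m$ before landing in $I_{x_{m+1}}$. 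The key observation is that the itinerary of $\pi(\xi)$ is obtained simply by forgetting the first symbols: $\pi(\xi) \in I_{x_1}$, and $\pi^m(\pi(\xi)) = \pi^{m+1}(\xi) \in I_{x_{m+1}}$ with $\eta_{x_{m+2}}(\pi^m(\pi(\xi))) = n_{m+1}$.

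Concretely, I would set $X' = \sigma(X) = (x_{i+1})_{i \in \mathbb N}$ and $N' = \sigma(N) = (n_{i+1})_{i \in \mathbb N}$, which are again elements of $\Sigma_2$ and $\Sigma_k^*$ respectively (note $\Sigma_k^*$ is shift-invariant since the alphabet $\{1,\ldots,k\}$ is unchanged). Then I would prove the two inclusions $\pi(P(X,N)) \subset P(X',N')$ and $P(X',N') \subset \pi(P(X,N))$. The first inclusion is immediate from the itinerary description above: if $\xi \in P(X,N)$ then $\pi(\xi)$ satisfies all the defining conditions of $P_m(X',N')$ for every $m$, hence $\pi(\xi) \in P(X',N')$. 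For the reverse inclusion I would take $\zeta \in P(X',N')$ and need to produce a preimage $\xi \in P(X,N)$ with $\pi(\xi) = \zeta$; here one uses that $\pi$ restricted to the branch $I_{x_0}$ is a homeomorphism onto its image (it is a composition of flow-induced diffeomorphisms and the expansion estimate of Proposition \ref{prop:pi.expands} shows it is injective), so $\zeta$ has a unique preimage $\xi$ in $I_{x_0}$ lying in the appropriate sub-interval determined by the condition $\eta_{x_1}(\xi) = n_0$; this $\xi$ then satisfies all the conditions defining $P(X,N)$.

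The main obstacle I expect is the surjectivity direction, i.e. showing that the unique $\pi$-preimage of $\zeta$ in the branch $I_{x_0}$ actually lies in $P(X,N)$ rather than escaping — this requires knowing that the nested intervals $P_m(X,N)$ are genuinely nonempty for every finite $m$, which in turn rests on the well-posedness and surjectivity of the counting function $\eta_*$ (every value in $\{1,\ldots,k\}$ is realized on a sub-interval of each branch, because the flow of $\widetilde Z$ spirals outward around $p_0$ and the curve $J$ is transverse to it, so $\eta_*$ takes all sufficiently large natural-number values and in particular sweeps through a full block of consecutive values on each relevant piece). Once the nonemptiness and nesting of the $P_m$'s is in hand — which is essentially the content that the previous lemmas and the construction of $\CU_r$ in Section \ref{1streturn} were designed to supply — the identification $\pi(P(X,N)) = P(\sigma X, \sigma N)$ follows by the elementary itinerary bookkeeping above, and in particular this exhibits the conjugacy relation $\pi \circ h_k = h_k \circ \sigma_k$ once $h_k$ is defined by $h_k(X,N) = P(X,N)$.
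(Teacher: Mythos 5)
Your proof is correct and follows essentially the same route as the paper, whose entire argument for this lemma is the one-line assertion that $\pi(P(X,N)) = P(\sigma(X,N))$ ``comes from the definition of the set.'' Your version is considerably more careful: you are right that the reverse inclusion $P(\sigma(X,N)) \subset \pi(P(X,N))$ requires injectivity of $\pi$ on each branch together with nonemptiness of the nested intervals $P_m$ (via the surjectivity of the counting function $\eta_*$), points the paper leaves entirely implicit.
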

\begin{proof}
 In fact one have $\pi(P(X,N)) = P(\sigma(X,N))$, this comes once again from the definition of the set.
\end{proof}

\begin{lemma}
 If $\pi: \Lambda \rightarrow \Lambda$ is such that $|\pi'(\xi)|>1$ for all $\xi \in \Lambda$, then $P(X,N)$ is a point $\forall (X,N) \in \Sigma_2 \times \Sigma^*_k$.
\end{lemma}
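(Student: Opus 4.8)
The plan is to show that the nested intersection $P(X,N)=\bigcap_i P_i(X,N)$ collapses to a point by exploiting the uniform expansion $|\pi'(\xi)|>1$ on $\Lambda$. Since each $P_i(X,N)$ is a closed interval with $P_{i+1}(X,N)\subset P_i(X,N)$, the intersection is a nonempty compact interval $[a,b]$; the goal is to prove $a=b$. The key observation is that $P_{m}(X,N)$ is precisely the set of points $\xi\in I_{x_0}$ whose first $m$ returns are prescribed by the symbol data, and by the previous lemma $\pi^m$ maps $P_m(X,N)$ bijectively onto $P_0(\sigma^m(X,N))=I_{x_m}$, which is one of the two fixed intervals $I_0,I_1$. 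Hence $\pi^m$ carries the interval $[a,b]\subset P_m(X,N)$ into $I_{x_m}\subseteq I$, and in particular $|\pi^m(b)-\pi^m(a)|\le |I|$ for every $m$, where $|I|$ is the fixed length of the neighbourhood $I=[q_1,q_2]$.

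First I would make the expansion quantitative: by Proposition \ref{prop:pi.expands} we may shrink $r$ so that $|\pi'(\xi)|\ge\lambda>1$ for all $\xi\in\CU_r$ (equivalently on $\Lambda$), and since $\pi$ is a local diffeomorphism on each interval $I_i$ and the orbit segment defining $\pi^m$ on $[a,b]$ stays in $\Lambda$ (because $[a,b]\subset P_m(X,N)\subset\Lambda$), the chain rule gives $|(\pi^m)'(\xi)|\ge\lambda^m$ on $[a,b]$. Then the mean value theorem yields $|\pi^m(b)-\pi^m(a)|\ge\lambda^m|b-a|$. Combining with the bound $|\pi^m(b)-\pi^m(a)|\le|I|$ from the previous paragraph gives $|b-a|\le\lambda^{-m}|I|$ for every $m\in\N$, and letting $m\to\infty$ forces $|b-a|=0$, i.e. $P(X,N)$ is a single point. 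Finally, $P(X,N)$ is nonempty because it is a decreasing intersection of nonempty compact sets, so it is exactly a point.

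The main obstacle is the bookkeeping needed to justify that $\pi^m$ is genuinely differentiable along the whole interval $[a,b]$ with the composite derivative bound, rather than just at a single orbit: one must check that for $\xi$ ranging over $P_m(X,N)$ the intermediate iterates $\xi,\pi(\xi),\dots,\pi^{m-1}(\xi)$ all lie in the region where $\pi$ is a $C^1$ expanding local diffeomorphism (this is where the well-posedness of $\eta_*$ and the fact established in Proposition \ref{prop:pi.expands} — that near each point the branch count $\ov k=k(\rho_X(\cdot))$ is locally constant, so $\pi$ has a smooth local expression $\widetilde\rho\circ H_+(\lambda^{\ov k}H_+^{-1}(\rho_X(\cdot)))$ — is essential). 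Once that is in place, the uniformity of $\lambda$ across all the (countably many) branches $I_i$, guaranteed by Proposition \ref{prop:pi.expands}, closes the argument. I would also remark that the same estimate shows $\mathrm{diam}\,P_m(X,N)\le\lambda^{-m}|I|\to0$, which is the precise statement that makes the limit process defining $h_k$ well posed and $h_k$ continuous.
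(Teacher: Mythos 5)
Your argument is correct, but it takes a genuinely different route from the paper's. The paper proves the lemma by a pigeonhole argument on lengths: if $l(P(X,N))>0$, then $|\pi'|>1$ forces $l(\pi^{n+1}(P))>l(\pi^{n}(P))$ for every $n$, so the iterates $\pi^{n}(P(X,N))$, all contained in $I$ with $l(I)<\infty$, cannot be pairwise disjoint; by the two preceding lemmas an intersection of two of them forces $\pi^{n_2-n_1}(P(X,N))=P(X,N)$, which is incompatible with strict growth of length. Your proof instead derives the quantitative estimate $\mathrm{diam}\,P_m(X,N)\le\lambda^{-m}\,l(I)$ from the chain rule and the mean value theorem. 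The trade-off is this: the paper's argument uses only the pointwise hypothesis $|\pi'(\xi)|>1$ exactly as stated in the lemma (since $l(\pi(A))=\int_A|\pi'|\,dl>l(A)$ for $l(A)>0$), and it avoids all bookkeeping about differentiability of $\pi^m$ along a full branch; your argument needs a uniform lower bound $\inf_{\CU_r}|\pi'|\ge\lambda>1$, which is \emph{not} literally the lemma's hypothesis (on the non-compact set $\CU_r$ accumulating at $q_0$, pointwise $>1$ does not formally yield a uniform bound) but is indeed supplied by the proof of Proposition \ref{prop:pi.expands}, where $|\pi'|\ge\al_X\widetilde{\al}|\la|^{\ov k}$ with $\ov k>k_0$ gives $|\pi'|\ge\al_X\widetilde{\al}|\la|^{k_0+1}>|\la|>1$ — you are right to flag that this, and the local constancy of the branch along $P_m(X,N)$, are the points needing justification. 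In exchange your approach buys a geometric rate of shrinking of the cylinders $P_m(X,N)$, which immediately gives the continuity (indeed a H\"older-type modulus) of $h_k$ that the paper must argue in a separate lemma.
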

\begin{proof}
 Let us consider $l$ as the length measure. Notice that if $l(P(X,N))>0$, then $l(\pi(P(X,N)))>l(P(X,N))$, but since $\pi^n(P(X,N)) \subset I$ and $l(I)<\infty$ the family $\{\pi^n(P(X,N)))\}_{n \in \mathbb N}$ cannot be pairwise disjoint, otherwise one would have $$ \infty > l(I) \geq l(\cup_n \pi^n(P(X,N)))=\sum_n \pi^n(P(X,N)) > \sum_n l(P(X,N)) = \infty,$$ which is an absurd. Therefore, there must exist  $n_1$ and $n_2$ such that $$\pi^{n_1}(P(X,N)) \cap \pi^{n_2}(P(X,N)) \neq \emptyset,$$ the above lemmas imply that $\pi^{n_2-n_1}P(X,N)=P(X,N)$, which cannot happen since $|\pi'|>1$. Hence $P(X,N)$ is a point.
\end{proof}

By Proposition \ref{prop:pi.expands} we know that if $q_1$ and $q_2$ are sufficiently close to $q_0$, then $|\pi'|>1$ on $I$. This means that the functions $h_k$ is well defined by the above lemma as
\begin{eqnarray*}\label{hk}
 h_k:\Sigma_2 \times \Sigma^*_k & \rightarrow & \Lambda \\
 (X,N) & \mapsto & P(X,N).
\end{eqnarray*}
Since the domain of $h_{k+1}$ contain the domain of $h_k$ and the two functions by construction coincide on the domain of $h_k$, the function $h$ 
\begin{eqnarray*}\label{h}
 h:\Sigma_2 \times \Sigma^b & \rightarrow & \Lambda \\
 (X,N) & \mapsto & h_k(X,N), \text{ if } (X,N) \in \Sigma_2 \times \Sigma^*_k.
\end{eqnarray*}
is well defined. Recall that $\pi(P(X,N)) = P(\sigma(X,N))$, which implies $\pi \circ h_k = h_k \circ \sigma_k$ as well as $\pi \circ h = h \circ \sigma$.

\begin{lemma} The maps $h_k$ and $h$ are continuous.
\end{lemma}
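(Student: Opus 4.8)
The plan is to establish continuity of $h_k$ first and then deduce continuity of $h$ by a gluing argument, using the description of the topology on $\Sigma_2\times\Sigma^b$ given just before the statement.

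\textbf{Continuity of $h_k$.} Fix $(X,N)\in\Sigma_2\times\Sigma_k^*$ and let $\xi=h_k(X,N)=P(X,N)$. Given $\e>0$, I want a cylinder neighbourhood of $(X,N)$ whose $h_k$-image lies in $(\xi-\e,\xi+\e)\cap\Lambda$. The key point is that the nested intervals $P_m(X,N)$ shrink to the single point $\xi$: since $|\pi'|>1$ on $I$, the preceding lemma tells us $\bigcap_m P_m(X,N)=\{\xi\}$, and because each $P_m(X,N)$ is a closed interval and the intersection is a point, their lengths tend to $0$. Hence there is $m$ with $P_m(X,N)\subset(\xi-\e,\xi+\e)$. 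Now observe that $P_m(X,N)$ depends only on the first $m+1$ entries $x_0,\dots,x_m$ of $X$ and the first $m$ entries $n_0,\dots,n_{m-1}$ of $N$; this is immediate from the inductive definition of $P_{m+1}$, in which $\eta_{x_{m+1}}(\pi^m(\xi))=n_m$ and $\pi^{m+1}(\xi)\in I_{x_{m+1}}$ are the only new constraints. Therefore, if $(X',N')$ agrees with $(X,N)$ on those finitely many coordinates, then $P_m(X',N')=P_m(X,N)$, and consequently $h_k(X',N')=P(X',N')\subset P_m(X',N')=P_m(X,N)\subset(\xi-\e,\xi+\e)$. Since the set of such $(X',N')$ is exactly a basic cylinder neighbourhood of $(X,N)$ in $\Sigma_2\times\Sigma_k^*$, this proves $h_k$ is continuous at $(X,N)$.

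\textbf{Continuity of $h$.} By definition $h$ restricts to $h_k$ on $\Sigma_2\times\Sigma_k^*$. The topology on $\Sigma_2\times\Sigma^b$ is the coarsest one having the cylinders of each $\Sigma_2\times\Sigma_k^*$ as open sets, so a basic open set of $\Sigma_2\times\Sigma^b$ is a cylinder determined by fixing finitely many coordinates $x_0,\dots,x_m$ and $n_0,\dots,n_{m-1}$ (with no bound imposed on the remaining $n_i$'s). Exactly as above, $P_m$ is constant on such a cylinder, so its $h$-image is contained in the interval $P_m$ of length $<\e$, and the same $\e$-argument goes through verbatim. Equivalently, one checks continuity of $h$ at $(X,N)$ by noting it lies in $\Sigma_2\times\Sigma_k^*$ for $k=\max_i\{n_i\}$ (which is finite by the definition of $\Sigma^b$), and the cylinder neighbourhoods produced in the $h_k$ argument are open in $\Sigma_2\times\Sigma^b$ as well.

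\textbf{Main obstacle.} The only genuinely nontrivial ingredient is the claim that $\operatorname{length}(P_m(X,N))\to 0$, i.e. that the nested closed intervals actually collapse to the point $\xi$ rather than to a nondegenerate interval. This has already been handled by the previous lemma (whose proof used $|\pi'|>1$ together with finiteness of $l(I)$ and the disjointness argument), so here it can simply be invoked; the rest is the routine observation that $P_m$ only sees finitely many symbols, which matches finite cylinders to small intervals. I should be slightly careful that $h_k$ and $h$ land in $\Lambda$ and that the intervals $P_m$ are taken inside $I$ so that $l(I)<\infty$ applies, but no further estimates are needed.
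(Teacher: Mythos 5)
Your proposal is correct and follows essentially the same route as the paper: pick $n_\e$ (your $m$) so that the nested closed interval $P_{n_\e}(X,N)$ lies in the $\e$-ball around $h_k(X,N)$, observe that $P_{n_\e}$ is determined by finitely many symbols so the corresponding cylinder maps into that ball, and note the identical argument works for $h$ since those cylinders are open in $\Sigma_2\times\Sigma^b$. You are in fact slightly more careful than the paper in spelling out why the lengths of the $P_m$ tend to zero and exactly which coordinates $P_m$ depends on.
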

\begin{proof}
Let $(X,N)\in \Sigma_2\times \Sigma_k^*$ and $\epsilon >0$ be given. From \eqref{P} and \eqref{hk} we know that $$h_k(X,N)= \bigcap_{n \in \mathbb N} P_n(X,N),$$ hence consider $n_\epsilon$ such that $P_{n_\epsilon}(X,N) \subset (-\epsilon +h_k(X,N), h_k(X,N) + \epsilon) \subset I$. 

Let $\mathcal V_{(X,N)}$ be a neighborhood of $(X,N)$ in $\Sigma_2 \times \Sigma_k^*$ given by the cylinder
$$ \mathcal V_{(X,N)}:= \{(Y,M) | y_i = x_i, \; m_{i+1} =n_{i+1} \; i,j \in \{0,1, \ldots, n_{\epsilon}\}\}.$$
And the continuity follows, since
$$h_k(\mathcal V_{(X,N)}) \subset (-\epsilon +h_k(X,N), h_k(X,N) + \epsilon).$$

The same proof serves for $h$.
\end{proof}

\begin{lemma}
 The map $h_k$ and $h$ are homeomorphisms onto their image.
\end{lemma}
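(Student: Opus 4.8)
Treat $h_k$ and $h$ separately, since their domains are topologically very different. Injectivity is immediate for both: if $h(X,N)=h(X',N')$ then $P(X,N)\cap P(X',N')\neq\emptyset$ (both sides equal the common value, a single point), so Lemma \ref{l1} forces $(X,N)=(X',N')$; the same argument applies to $h_k$. Together with the previous lemma (continuity of $h_k$ and $h$) and the already established relations $\pi\circ h_k=h_k\circ\sigma_k$, $\pi\circ h=h\circ\sigma$, we already know $h_k$ and $h$ are continuous bijections onto $\Lambda_k:=h_k(\Sigma_2\times\Sigma_k^*)$ and $\Lambda=\bigcup_k\Lambda_k$ respectively, intertwining the shifts with $\pi$. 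So all that remains is continuity of the inverses.

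For $h_k$ this is soft: the domain $\Sigma_2\times\Sigma_k^*$ is compact (Tychonoff, as recalled in \S3) while the image $\Lambda_k\subset I\subset\R$ is Hausdorff, and a continuous bijection from a compact space onto a Hausdorff space is automatically a homeomorphism. Hence $h_k\colon\Sigma_2\times\Sigma_k^*\to\Lambda_k$ is a homeomorphism onto its image.

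For $h$ the domain $\Sigma_2\times\Sigma^b$ is not compact, so I would establish continuity of $h^{-1}\colon\Lambda\to\Sigma_2\times\Sigma^b$ directly, essentially reversing the estimate used in the proof of the previous lemma. Fix $(X,N)$ and a cylinder neighbourhood $C\ni(X,N)$; it constrains only finitely many coordinates, so choose $n$ large enough that all of them lie among the first $n+1$ symbols of the first factor and the first $n$ symbols of the second. From the construction of the nested closed intervals $P_m(X,N)$ (see \eqref{P}), a point $\xi\in\Lambda$ lies in $P_n(X,N)$ if and only if $h^{-1}(\xi)$ agrees with $(X,N)$ on exactly those symbols; since $n$ was chosen large, such sequences lie in $C$, and therefore $h^{-1}\!\left(P_n(X,N)\cap\Lambda\right)\subset C$. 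Thus it suffices to show that $P_n(X,N)\cap\Lambda$ is a neighbourhood of $h(X,N)$ in $\Lambda$, and for this I would prove it is in fact clopen in $\Lambda$: the intervals $P_n(X',N')$ over the countably many length-$n$ cylinders are pairwise disjoint inside $\Lambda$ — this is Lemma \ref{l1} applied at finite level, together with $q_0\notin\Lambda$ (the forward orbit of $q_0$ reaches $p_0$ and stays) — and, letting some symbol of the second factor tend to infinity, they can only accumulate on points whose forward orbit spirals into the pseudo saddle--focus $p_0$ and never returns to $I$, hence outside $\Lambda$. Consequently every convergent sequence in $\Lambda\setminus P_n(X,N)$ has its limit in $\Lambda\setminus P_n(X,N)$, so the latter is closed in $\Lambda$, $P_n(X,N)\cap\Lambda$ is open, and since it always contains $h(X,N)$ we are done.

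The step I expect to be the real obstacle is this last one. Because $\Lambda$ is not closed (only $\Lambda\cup\{q_0\}$ is), no compactness shortcut is available for $h$, and controlling how the finite-level cylinder intervals $P_n(X',N')$ sit inside $\Lambda$ is not a matter of soft topology: it relies on the expansion $|\pi'|>1$ from Proposition \ref{prop:pi.expands} and on the geometry of the sliding spiral around $p_0$ — precisely the absence of hyperbolic structure mentioned in the introduction, here compensated by the well-posedness of the counting functions $\eta_*$.
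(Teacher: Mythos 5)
Your proof is correct, but it is organized quite differently from the paper's. The paper proves the statement only for $h_k$ --- injectivity from Lemma \ref{l1}, and continuity of $h_k^{-1}$ by arguing directly that points of $\Lambda_k$ close to $h_k(X,N)$ must share a long initial itinerary with $(X,N)$, ``by continuity of the flow'' --- and then declares the case of $h$ analogous. You instead dispatch $h_k$ by the soft compact-to-Hausdorff argument (legitimate here, since $\Sigma_2\times\Sigma_k^*$ is compact by Tychonoff and continuity plus injectivity are already in hand), and you reserve the real work for $h$, where compactness is unavailable. That is the right instinct: the paper's ``analogous'' hides exactly the point you isolate, namely that one must show each finite-level cylinder $P_n(X,N)\cap\Lambda$ is open in $\Lambda$, which amounts to showing that the countably many level-$n$ intervals can accumulate only on points whose orbit eventually hits $q_0$ (equivalently, reaches the pseudo-equilibrium $p_0$ and remains there --- ``spirals into'' is not quite the right picture, since $p_0$ is an unstable focus of $\widetilde Z$ and the orbit simply parks at it), hence outside $\Lambda$; one small repair is that the pairwise disjointness of the level-$n$ cylinders inside $\Lambda$ is not literally Lemma \ref{l1} (which concerns the infinite-level sets $P(X,N)$) but the more elementary fact that every point of $\Lambda$ avoids $q_0$ and its $\pi$-preimages and therefore has a single well-defined length-$n$ itinerary. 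Your argument also makes explicit something the paper leaves ambiguous: the conclusion for $h$ holds when $\Sigma_2\times\Sigma^b$ carries the topology generated by cylinders constraining only finitely many coordinates; if one instead insisted that each $\Sigma_2\times\Sigma_k^*$ be open in $\Sigma_2\times\Sigma^b$, continuity of $h^{-1}$ would force each $\Lambda_k$ to be open in $\Lambda$, which fails because points with a very large symbol far out in their itinerary accumulate on every point of $\Lambda_k$. In short: same injectivity step, a cleaner route for $h_k$, and a more honest (and necessary) argument for $h$ than the one-line ``analogous'' in the paper.
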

\begin{proof}
We prove $h_k$ is a homeomorphism onto its image, the case for $h$ is analogous. Notice that $h_k$ is injective by Lemma \ref{l1}. To see that the inverse is continuous, consider a point $h_k(X,N)$ and a neighborhood $\mathcal U_{(X,N)}$ of $(X,N)$, therefore $(Y,M)\in\mathcal U_{(X,N)}$ means that the the first digits of both sequences $(X,N)$ and $(Y,M)$ coincide, using the continuity of the flow we get that for points close enough to $h_k(X,N)$ they must have this predefined trajectory and the continuity follows.
\end{proof}

The above lemmas imply Theorem \ref{theo:topological}, where $\Lambda_k := h_k(\Sigma_2 \times \Sigma_k^*)$.
\hfill $\square$

\section{Final comments}

\subsection{Consequences}We are able to fully understand the dynamics around the point $q_0 \in \partial M^s$. Not only from its topological point of view as well as from the ergodic point of view. We give a few consequences of our Theorem \ref{theo:topological} below.

The first consequence we state is that if $K$ is an invariant compact set for $\pi$ (the first return map) then $\pi|\Lambda$ "lives" in some symbolic dynamics:

\begin{mcor}\label{cor:compact.invariant}
 Given a compact set $K \subset \mathcal U$ $\pi$-invariant, then $\pi|K$ is conjugate for some $k$ to a $\sigma_k|\Omega$ where $ \Omega \subset \Sigma_2 \times \Sigma_k^*$ is an $\sigma_k$-invariant set.
\end{mcor}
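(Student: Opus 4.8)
The plan is to exploit the homeomorphism $h:\Sigma_2\times\Sigma^b\to\Lambda$ from Theorem \ref{theo:topological}(b), which conjugates $\sigma$ and $\pi$, and simply transport the compact set $K$ back to symbol space. First I would observe that since $K\subset\CU$ is compact and $\pi$-invariant, every point of $K$ returns to $I$ infinitely often, so $K\subset\Lambda$; hence $h^{-1}(K)$ makes sense. Set $\Omega':=h^{-1}(K)\subset\Sigma_2\times\Sigma^b$. Because $h$ is a homeomorphism onto $\Lambda$ and conjugates the dynamics, $\Omega'$ is $\sigma$-invariant and $\pi|K$ is conjugate to $\sigma|\Omega'$ via $h|\Omega'$.

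The remaining point is to upgrade ``$\sigma$ on a subset of $\Sigma_2\times\Sigma^b$'' to ``$\sigma_k$ on a subset of $\Sigma_2\times\Sigma_k^*$'' for a single $k$, i.e. to show the $\Sigma^b$-coordinates of points in $\Omega'$ are uniformly bounded. Here is where compactness of $K$ does the work. Recall $\Lambda=\bigcup_k\Lambda_k$ with $\Lambda_k=h_k(\Sigma_2\times\Sigma_k^*)$, and the $\Lambda_k$ are nested; moreover $\Lambda\cup\{q_0\}$ is compact and the only accumulation behaviour as the symbol entries grow is convergence toward $q_0$ (the intervals $I_i\to\{q_0\}$). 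Since $K$ is compact and $q_0\notin K$ (indeed $q_0\notin\CU$, as $\CU=\bigcup I_i$ with $q_0$ only a limit point), $K$ stays a positive distance away from $q_0$; therefore $K$ can meet only finitely many of the ``large-digit'' pieces, which forces a uniform bound $L$ on all coordinates of $h^{-1}(K)$. Concretely, $K\subset\Lambda\setminus B_\delta(q_0)$ for some $\delta>0$, and $\Lambda\setminus B_\delta(q_0)\subset\Lambda_k$ for $k$ large enough (because the tails $\bigcup_{i>k}I_i$ shrink into $B_\delta(q_0)$). Hence $K\subset\Lambda_k$, and $\Omega:=h_k^{-1}(K)\subset\Sigma_2\times\Sigma_k^*$ is the desired $\sigma_k$-invariant set with $\pi|K$ conjugate to $\sigma_k|\Omega$.

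The main obstacle, and the only genuinely non-formal step, is justifying that $K$ being bounded away from $q_0$ implies it lies inside some $\Lambda_k$. This requires knowing that a point whose itinerary uses an arbitrarily large digit $n_i$ must lie arbitrarily close to $q_0$ — equivalently, that the nested construction of the $P_m(X,N)$ places a point with a big intermediate digit inside a small interval $I_j$ with $j$ large, and $I_j\to\{q_0\}$. This is essentially built into the definition of $\eta_*$ and the fact that $\eta_*(\xi)$ large means the forward orbit spirals many times near $p_0$, which pins the relevant return point near $q_0$; I would make this precise by noting $h_k$ maps the cylinder $\{n_0=j\}$ into $I_j$ (or a controlled neighbourhood thereof) and $\mathrm{diam}(I_j)\to0$, $I_j\to\{q_0\}$. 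Once that localization is in hand, the corollary is immediate from Theorem \ref{theo:topological} and the homeomorphism property of $h$ and $h_k$.
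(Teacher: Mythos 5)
Your overall strategy is the right one and is essentially the paper's: the paper also reduces the corollary to showing that the digits $\eta_*$ are uniformly bounded on $K$ (it does so by noting $\eta_*$ is finite at each point, locally bounded by continuity, and then extracting a finite subcover of the compact set $K$), and then uses $\pi$-invariance to conclude that the whole itinerary of every point of $K$ lives in $\Sigma_2\times\Sigma_{k_0}^*$. However, the step you label ``Concretely'' contains a false inclusion: it is not true that $\Lambda\setminus B_\delta(q_0)\subset\Lambda_k$ for $k$ large. Membership in $\Lambda_k$ constrains the \emph{entire} forward itinerary $(n_m)_{m\in\N}$, i.e.\ $\eta_{x_{m+1}}(\pi^m(\xi))\leq k$ for all $m$, whereas the distance from $\xi$ itself to $q_0$ only controls the first digit $n_0$ (it is $\pi^m(\xi)$ being close to $q_0$ that produces a large $m$-th digit). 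A point $\xi$ far from $q_0$ whose later iterates pass arbitrarily close to $q_0$ lies in $\Lambda\setminus B_\delta(q_0)$ but in no $\Lambda_k$, so the set $\Lambda\setminus B_\delta(q_0)$ is not contained in any $\Lambda_k$ and is not even $\pi$-invariant.

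The repair is short and uses exactly the hypothesis you under-exploit: since $K$ is $\pi$-invariant, for every $\xi\in K$ and every $m$ one has $\pi^m(\xi)\in K\subset\Lambda\setminus B_\delta(q_0)$, and therefore \emph{every} iterate meets only the finitely many components $I_i$ with $i$ below some threshold, which bounds $\eta_{x_{m+1}}(\pi^m(\xi))$ uniformly in $\xi$ and $m$. That yields $K\subset\Lambda_{k}$ for a single $k$, and the rest of your argument (transporting $K$ by $h_k^{-1}$ to get the $\sigma_k$-invariant set $\Omega$ and the conjugacy) goes through as written. You should also state explicitly the geometric fact you are relying on, namely that $\eta_*(\zeta)\to\infty$ exactly as $\zeta\to q_0$ within $\CU_r$ (equivalently, $\sup_{\zeta\in I_i}\eta_*(\zeta)$ is finite for each $i$ and the $I_i$ accumulate only at $q_0$); this is the content behind both your distance argument and the paper's continuity-plus-finite-subcover argument, and without it neither version of the uniform bound is justified.
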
 
\begin{proof}
Notice that for $\xi\in\Lambda,$ there exists a positive number $M_\xi<\infty$ such that $n_*(\xi)<M_\xi$ for $* \in \{0,1\}.$ By continuity of $\pi$ there is an neighbourhood $U_{\xi}$ of ${\xi}$ such that $\forall z \in U_{\xi}$ $n_*(z)\leq M_{\xi}$. Since $K$ is a compact set take a finite cover of $K$ and consider the maximum of $n_*$ for these finite cover. Let $k_0$ be this maximum. This means that for all points in $K$ if we catalogue its trajectory it has to be given by a sequence in $\Sigma_2 \times \Sigma_{k_0}^*$. Proving the corollary.
\end{proof}

We are able to fully characterize the ergodic properties of the system:

\begin{mcor}[Ergodic]\label{theo:ergodic}
 There is a neighbourhood $\mathcal U \subset \partial M$ of $q_0$ such that: if $(\pi,\mu)$ is ergodic, then there exist $k\in \mathbb N$  such that
 \begin{itemize}
  \item $\mu(\Lambda_k)=1$;
  \item there exist  a measure $\nu$ which is $\sigma_k$-invariant for which $(\pi|\Lambda_k), \mu)$ is isomorphic to $(\sigma_k, \nu)$.
 \end{itemize}
\end{mcor}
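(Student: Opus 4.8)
\textbf{Proof proposal for Corollary \ref{theo:ergodic}.}

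The plan is to exploit the topological conjugacy $h:\Sigma_2\times\Sigma^b\to\Lambda$ furnished by Theorem \ref{theo:topological}(b), transport the ergodic measure $\mu$ to $\Sigma_2\times\Sigma^b$ via $h^{-1}$, and then argue that an ergodic measure on the disjoint union $\bigcup_k(\Sigma_2\times\Sigma_k^*)$ must be supported on a single piece $\Sigma_2\times\Sigma_k^*$. First I would note that, by the conjugacy, $(\pi,\mu)$ being ergodic forces $\mu$ to be concentrated on $\Lambda$ (points not returning infinitely often form a wandering set, which an invariant probability measure must ignore), so that the push-forward $\widetilde\mu := (h^{-1})_*\mu$ is a well-defined $\sigma$-invariant Borel probability measure on $\Sigma_2\times\Sigma^b$, and $(\sigma,\widetilde\mu)$ is ergodic because $h$ conjugates the dynamics.

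The key step is the observation that each $\Lambda_k$ is $\pi$-invariant (part (a) of the theorem) and $\Lambda_k\subset\Lambda_{k+1}$, so the sets $A_k := h(\Sigma_2\times\Sigma_k^*) = \Lambda_k$ form an increasing sequence of $\pi$-invariant measurable sets with $\bigcup_k A_k = \Lambda$. By ergodicity each $A_k$ has $\mu$-measure $0$ or $1$; since $\mu(\Lambda)=1=\lim_k\mu(A_k)$ (continuity from below) there is a smallest $k$ with $\mu(A_k)=\mu(\Lambda_k)=1$. This gives the first bullet. For the second bullet, restrict the conjugacy to $\Lambda_k$: by Theorem \ref{theo:topological}(a), $h_k:\Sigma_2\times\Sigma_k^*\to\Lambda_k$ is a homeomorphism with $h_k\circ\sigma_k=\pi\circ h_k$, hence a Borel isomorphism intertwining $\pi|\Lambda_k$ and $\sigma_k$. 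Setting $\nu := (h_k^{-1})_*(\mu|\Lambda_k)$ gives a $\sigma_k$-invariant Borel probability measure on $\Sigma_2\times\Sigma_k^*$, and $h_k$ is by construction an isomorphism of the measure-preserving systems $(\pi|\Lambda_k,\mu)$ and $(\sigma_k,\nu)$. The neighbourhood $\mathcal U$ is the one produced in Theorem \ref{theo:topological}.

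The main obstacle I expect is the measurability and the precise status of the null set: one must be careful that $\mu$ is a priori only defined on $\mathcal U$ (or on $\Lambda$), that $h^{-1}$ is measurable (which follows since $h$ is a homeomorphism onto its image and $\Lambda$ is Borel — indeed $\Lambda\cup\{q_0\}$ is compact by part (b)), and that "$f^{-1}(B)=B\bmod 0$" is the right invariance notion so that the dichotomy $\mu(A_k)\in\{0,1\}$ legitimately applies to the genuinely invariant sets $\Lambda_k$. Once these measure-theoretic technicalities are dispatched, the argument is a routine transport of structure along the conjugacy, with the only genuinely dynamical input being the nested, invariant exhaustion $\Lambda=\bigcup_k\Lambda_k$ supplied by Theorem \ref{theo:topological}.
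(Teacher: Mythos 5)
Your argument is essentially identical to the paper's proof: both use the nested $\pi$-invariant exhaustion $\Lambda=\bigcup_k\Lambda_k$, apply the ergodic zero--one dichotomy to conclude some $\Lambda_k$ has full measure, and then push $\mu$ through the conjugacy $h_k$ to obtain $\nu$. Your extra remarks on why $\mu$ concentrates on $\Lambda$ and on measurability are sensible refinements of the same route, not a different one.
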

\begin{proof}
 We know that $ \Lambda_k \subset \Lambda_{k+1}$ and $\Lambda = \bigcup_{k=1}^\infty \Lambda_k$ and $\Lambda_k$ is $\pi$-invariant. Hence, by ergodicity $\mu(\Lambda_k)\in \{ 0,1\}$, if $\mu(\Lambda_k)=0$, $\forall k \in \mathbb N$ then $\mu(\Lambda)=0$ which is an absurd. Therefore, there exist $k_0$ such that $\mu(\Lambda_{k_0})=1$. Since there is a conjugacy from $\pi|\Lambda_{k_0}$ to $\sigma_{k_0}$ the theorem is done with $\nu:= (h_{k_0})_*\mu$.
\end{proof}

\subsection{Conclusion and further directions} In this paper we studied Filippov systems admitting a sliding Shilnikov orbit $\G$, which is a homoclinic connection inherent to Filippov systems. This connection has been firstly studied in \cite{NT}. Using the well known theory of Bernoulli shifts, we were able to provide a full topological and ergodic description of the dynamics of Filippov systems nearby a sliding Shilnikov orbit $\Gamma$, answering then some inquiries made in \cite{NT}. As our main result, we established the existence of a set $\Lambda\subset \p M^s$ such that the restriction to $\Lambda$ of the the first return map $\pi$, defined nearby $\G$, is topologically conjugate to a Bernoulli shift with infinite topological entropy. This ensures $\pi$, consequently the flow, to be as much chaotic as one wishes. In particular, given any natural number $m\geq 1$ one can find infinitely many periodic points of the first return map with period $m$ and, consequently, infinitely many closed orbits nearby $\G$ of the Filippov system.

As it has already been observed in \cite{NT}, a possible direction for further investigations is to consider higher dimensional vector fields, since in higher dimension it is allowed the existence of many other kinds of sliding homoclinic connections. We feel that the techniques applied in this paper may be straightly followed to obtain similar results in higher dimensions. Another open question is to know what happens when the sliding Shilnikov orbit is unfolded.

\section*{Acknowledgements}


The authors are very grateful to Marco A. Teixeira for reading the paper and making helpful comments and suggestions.

The first author is supported by a FAPESP grant 2015/02517-6 and the second author was supported by FAPESP grant 2015/02731-8.

\end{document}